\numberwithin{equation}{section}
\newtheorem{theorem}{Theorem}[section]
\newtheorem{lemma}[theorem]{Lemma}
\newtheorem{corollary}[theorem]{Corollary}
\newtheorem{proposition}[theorem]{Proposition}
\theoremstyle{definition}
\newtheorem{remark}[theorem]{Remark}
\DeclareMathOperator{\re}{Re}
\DeclareMathOperator{\im}{Im}
\DeclareMathOperator{\sign}{sign}
\newcommand{\ind}{\mathbb{1}}
\newcommand{\pr}{\mathbb{P}}
\newcommand{\ex}{\mathbb{E}}
\newcommand{\C}{\mathbb{C}}
\newcommand{\R}{\mathbb{R}}
\newcommand{\fourier}{\mathscr{F}}
\newcommand{\ph}{\varphi}
\newcommand{\dir}{\mathrm{D}}
\newcommand{\neu}{\mathrm{N}}
\mathchardef\mathhyphen="2D
\newcommand{\cm}{\mathscr{C}\mspace{-4mu}\mathscr{M}}
\newcommand{\am}{\mathscr{A}\mspace{-4mu}\mathscr{M}}
\newcommand{\amcm}{\am\mspace{-2mu}\mathhyphen\mspace{-1mu}\cm}
\renewcommand{\le}{\leqslant}
\renewcommand{\ge}{\geqslant}
\NewDocumentCommand{\formula}{ssom}{%
 \IfBooleanTF{#1}{%
  \IfBooleanTF{#2}{%
   \IfValueTF{#3}%
    {\begin{align}\label{#3}\begin{gathered}#4\end{gathered}\end{align}}%
    {\begin{gather}#4\end{gather}}%
  }{%
   \IfValueTF{#3}%
    {\begin{align}\label{#3}\begin{aligned}#4\end{aligned}\end{align}}%
    {\begin{gather*}#4\end{gather*}}%
  }%
 }{%
  \IfValueTF{#3}%
   {\begin{align}\label{#3}#4\end{align}}%
   {\begin{align*}#4\end{align*}}%
 }%
}
\begin{document}

\title{Poisson kernels on the half-plane are bell-shaped}
\author{Mateusz Kwaśnicki}
\thanks{Work supported by the National Science Centre, Poland, grant no.\@ 2023/49/B/ST1/04303}
\address{Mateusz Kwaśnicki \\ Department of Analysis and Stochastic Processes \\ Wrocław University of Science and Technology \\ ul. Wybrzeże Wyspiańskiego 27 \\ 50-370 Wrocław, Poland}
\email{\href{mailto:mateusz.kwasnicki@pwr.edu.pl}{\textsf{mateusz.kwasnicki@pwr.edu.pl}}}
\keywords{Elliptic operator, harmonic function, Poisson kernel, extension technique, bell-shape}
\subjclass[2020]{%
 26A51, 
 35J25, 
 35J70, 
 60J60
}

\begin{abstract}
Consider a second-order elliptic operator $L$ in the half-plane $\R \times (0, \infty)$ with coefficients depending only on the second coordinate. The Poisson kernel for $L$ is used in the representation of positive $L$-harmonic functions, that is, solutions of $L u = 0$. In probabilistic terms, the Poisson kernel is the density function of the distribution of the diffusion in $\R \times (0, \infty)$ with generator $L$ at the hitting time of the boundary. We prove that the Poisson kernel for $L$ is bell-shaped: its $n$th derivative changes sign $n$ times. In particular, it is unimodal and it has two inflection points (it is concave, then convex, then concave again).
\end{abstract}

\maketitle

%
%

\section{Introduction and statement of the result}

Since the influential work of Caffarelli and Silvestre~\cite{cs07}, the \emph{harmonic extension technique} has become a standard method of tackling nonlocal problems using local PDE methods. Identification of the square root of the Laplace operator with the Dirichlet-to-Neumann operator in half-space is a classical result, and~\cite{cs07} provides a similar description of other fractional powers of the Laplace operator. While similar ideas appeared earlier, also in a probabilistic context (the link between corresponding jump-type Markov processes and diffusions goes back to the work of Molchanov and Ostrovski~\cite{mo69}; see also~\cite{cfy06,d04,gz03,m87,s58}), Caffarelli and Silvestre stated the method clearly, thereby making it readily accessible to the PDE community. They were also the first to give an explicit and surprisingly simple expression for the corresponding Poisson kernel, see~\eqref{eq:cs:poisson} below.

Certain more general nonlocal operators are known to allow a similar harmonic extension technique; that is, they are linked to solutions of appropriate PDEs in a half-space. An explicit expression for the Poisson kernel is, however, rarely available. Below we consider a class of translation-invariant nonlocal operators, for which the harmonic extension technique was developed in~\cite{k22} (see also~\cite{k23} for a probabilistic counterpart). Using the results of~\cite{ks22}, we prove a certain `geometric' property of the corresponding Poisson kernel: it is \emph{bell-shaped}, that is, its $n$th derivative changes sign exactly $n$ times for $n = 1, 2, \ldots$

We state our main result, Theorem~\ref{thm:main}, in Section~\ref{sec:main}. First, however, we give a short introduction to the harmonic extension technique in Section~\ref{sec:harmonic}, and we discuss the elliptic equations covered by our methods in Section~\ref{sec:elliptic}. Probabilistic interpretation of our work is presented in Section~\ref{sec:probability}. We mention here that a parallel development in~\cite{w} covers the case of discrete variables, and that our main theorems contain as a special case the main results of~\cite{js15,s15,y90,y92}.

\subsection{Harmonic extensions}
\label{sec:harmonic}

By the classical Poisson representation formula, given any bounded Borel function $f$ on $\R^d$, the function $u$ given by
\formula{
 u(x, y) & = \frac{\Gamma(\tfrac{d + 1}{2})}{\pi^{(d + 1) / 2}} \int_{\R^d} \frac{y}{(\lvert x - x' \rvert^2 + y^2)^{(d + 1) / 2}} \, f(x - x') dx'
}
is harmonic in the upper half-space $\R^d \times (0, \infty)$ and it converges to $f(x)$ in an appropriate sense as $y \to 0^+$. Conversely, every bounded harmonic function $u$ in the upper half-space is of the above form. We say that $u$ is the \emph{harmonic extension} of $f$, and it is now well-understood that various fine properties of $f$, such as its regularity, can be conveniently expressed in terms of $u$; see, for example, Stein's monograph~\cite{s70}. The integral kernel
\formula{
 P_{(x, y)}(x') & = \frac{\Gamma(\tfrac{d + 1}{2})}{\pi^{(d + 1) / 2}} \, \frac{y}{(\lvert x - x' \rvert^2 + y^2)^{(d + 1) / 2}}
}
is known as the \emph{Poisson kernel} for the upper half-space.

In~\cite{cs07}, Caffarelli and Silvestre generalised the above construction by considering $\alpha \in (0, 2)$ and
\formula{
 u(x, y) & = \frac{\Gamma(\tfrac{d + \alpha}{2})}{\alpha \pi^{d / 2} \Gamma(\tfrac{\alpha}{2})} \int_{\R^d} \frac{y^\alpha}{(\lvert x - x' \rvert^2 + y^2)^{(d + \alpha) / 2}} \, f(x - x') dx' .
}
They proved that $u$ is a solution of the elliptic PDE $L u = 0$, where
\formula{
 L u(x, y) & = \nabla_{x, y} \cdot (y^{1 - \alpha} \nabla_{x, y} u)(x, y) , 
}
and again $u(x, y)$ converges to $f(x)$ in an appropriate sense as $y \to 0^+$. This enabled the use of standard (local) PDE techniques in the study of a nonlocal operator, the fractional Laplace operator
\formula{
 -(-\Delta)^{\alpha / 2} f(x) & = \frac{2^\alpha \Gamma(\tfrac{d + \alpha}{2})}{\pi^{d / 2} \lvert \Gamma(-\tfrac{\alpha}{2}) \rvert} \operatorname{pv}\int_{\R^d} \frac{f(x') - f(x)}{\lvert x' - x \rvert^{d + \alpha}} \, dx' ,
}
because it coincides with the corresponding \emph{Dirichlet-to-Neumann operator}:
\formula{
 -(-\Delta)^{\alpha / 2} f(x) & = \frac{\lvert \Gamma(-\tfrac{\alpha}{2}) \rvert}{\alpha 2^\alpha \Gamma(\tfrac{\alpha}{2})} \lim_{y \to 0^+} \frac{u(x, y) - u(x, 0)}{y^\alpha} \, .
}
By analogy, the kernel function
\formula[eq:cs:poisson]{
 P_{(x, y)}(x') & = \frac{\Gamma(\tfrac{d + \alpha}{2})}{\alpha \pi^{d / 2} \Gamma(\tfrac{\alpha}{2})} \, \frac{y^\alpha}{(\lvert x - x' \rvert^2 + y^2)^{(d + \alpha) / 2}}
}
is said to be the Poisson kernel for $L$, and the method described above is now known as the Caffarelli--Silvestre extension technique. We refer to Section~10 in~\cite{g19} or Section~6 in~\cite{k19:fractional} for further discussion and references.

More generally, one can replace $L$ with an arbitrary second-order elliptic operator in the upper half-space, and use an analogous construction to study the corresponding Dirichlet-to-Neumann operator $K$. It turns out that if we require $L$ to be invariant under translations and rotations in the $x$ variable, then $-K$ is a \emph{complete Bernstein function} of $-\Delta$, and, conversely, every complete Bernstein function of $-\Delta$ is the Dirichlet-to-Neumann operator $-K$ for an essentially unique $L$ of this kind. This is a consequence of Krein's spectral theory of strings, and the details were given by Mucha and the author in~\cite{km18}; see~\cite{ah21,fls15,hl,st10} for closely related research. However, with very few exceptions, no explicit formula is known for the Poisson kernel for $L$ and, to the best knowledge of the author, it has not even been proved that the Poisson kernel is unimodal.

When $d = 1$, there is an analogous description for operators $L$ which are invariant under translations in the $x$ variable, but not necessarily symmetric. In this case the corresponding Dirichlet-to-Neumann operators $K$ have a kernel function which is completely monotone on $(0, \infty)$ and absolutely monotone on $(-\infty, 0)$, and again there is a one-to-one correspondence between nonlocal operators $K$ of this form and the corresponding elliptic operators $L$ (up to certain natural transformations). This result was given in~\cite{k22}, and its probabilistic variant is described in~\cite{k23}; the crucial ingredient is, however, due to Eckhardt and Kostenko, and de Branges; see~\cite{ek16} and the references therein. The Poisson kernel for these operators $L$ is again described only in terms of Fourier transform (see~\eqref{eq:poisson:fourier} and~\eqref{eq:poisson}), and except in a few special cases, explicit expressions are not available.

\subsection{Elliptic equations}
\label{sec:elliptic}

In the present paper we prove that in the context of the previous paragraph, the Poisson kernel $P_{(x, y)}(x')$ for the upper half-plane is a \emph{bell-shaped} function of $x'$. This means that it is unimodal (increasing and then decreasing), it has two inflection points (it is convex, then concave, then again convex), and so on: the $n$th derivative of the Poisson kernel changes sign exactly $n$ times. Before we state this result rigorously in Theorem~\ref{thm:main}, let us describe in more detail the class of operators $L$ that it covers.

Following~\cite{k22}, we consider functions $u$ defined either on the upper half-plane $\R \times (0, \infty)$, or a strip $\R \times (0, R)$. In order to treat the two cases simultaneously, we set $R = \infty$ in the former one. We consider a general second-order elliptic operator $L$ on $\R \times (0, R)$, with coefficients depending only on the second coordinate. That is,
\formula[eq:operator:general]{
 L & = a(y) \partial_{xx} + 2 b(y) \partial_{xy} + c(y) \partial_{yy} + d(y) \partial_x + e(y) \partial_y ,
}
where $a(y) \ge 0$, $c(y) \ge 0$ and $(b(y))^2 \le a(y) c(y)$. We say that $u$ is an $L$-harmonic extension of a bounded Borel function $f$ on $\R$ if $u$ is a bounded solution of $L u = 0$ in $\R \times (0, R)$, and if $u(x, y)$ converges to $f(x)$ as $y \to 0^+$. If $R$ is finite, then we additionally require that $u(x, y)$ converges to zero as $y \to R^-$.

The description of most general assumptions on the coefficients that allow one to define the notion of a solution of $L u = 0$ in a rigorous way seems problematic, and we will not attempt to do so here. Instead, we follow the approach of~\cite{k22}, where it was proved that an appropriate change of variables allows one to rewrite the elliptic equation $L u = 0$ in the following \emph{reduced form}:
\formula{
 (a(y) + (b(y))^2) \partial_{xx} u(x, y) + 2 b(y) \partial_{xy} u(x, y) + \partial_{yy} u(x, y) & = 0 ,
}
where $a(y)$ is a nonnegative, locally integrable function on $[0, R)$, and $b(y)$ is a locally square-integrable function on $[0, R)$. In fact, we may allow $a$ to be a locally finite nonnegative measure on $[0, R)$. Thus, from now on we will assume that
\formula[eq:operator]{
 L & = (a(dy) + (b(y))^2 dy) \partial_{xx} + 2 b(y) \partial_{xy} + \partial_{yy} .
}
The equation $L u = 0$ is then understood in a weak sense; we refer to Definition~2.2 and the following discussion in~\cite{k22} for details. Proposition~2.3 in~\cite{k22} asserts that for every square-integrable function $f$ on $\R$ there is a unique $L$-harmonic extension $u$, and Lemma~4.1 in~\cite{k22} states that
\formula[eq:poisson:fourier]{
 \fourier u(\xi, y) & = \fourier f(\xi) \ph_\xi(y) ,
}
where $\fourier$ denotes the Fourier transform with respect to the $x$ variable, and $\ph_\xi$ is a solution to an appropriate ODE, discussed in more detail in Section~\ref{sec:ode} below. In particular, if $\ph_\xi(y)$ (as a function of $\xi$) is the Fourier transform of a function (or perhaps a measure) $P_y$, then~\eqref{eq:poisson:fourier} can be written as
\formula[eq:poisson]{
 u(x, y) & = \int_{-\infty}^\infty f(x) P_{(x, y)}(x') dx' ,
}
where $P_{(x, y)}(x') = P_y(x - x')$ is the analogue of the classical Poisson kernel for $L$.

\subsection{Main result}
\label{sec:main}

A nonnegative smooth function $f$ on $\R$ is \emph{bell-shaped} if $f(x)$ converges to zero as $x \to \pm\infty$ and the $n$th derivative of $f$ changes sign exactly $n$ times for $n = 0, 1, 2, \ldots$\, A bounded Borel function $f$, or, more generally, a finite measure $f$, is \emph{weakly bell-shaped} if the convolution of $f$ with the Gauss--Weierstrass kernel $(4 \pi t)^{-1/2} \exp(-x^2 / (4 t))$ is bell-shaped for every $t > 0$. The notion of a bell-shaped function was introduced in 1940s, and the question whether all stable distributions are bell-shaped attracted significant attention before being fully resolved through the successive works of Gawronski~\cite{g84}, Simon~\cite{s15}, and the author~\cite{k20}. A complete characterisation of bell-shaped functions was given recently by Simon and the author in~\cite{ks22}. Note that a smooth weakly bell-shaped function is automatically bell-shaped, and weakly bell-shaped measures are in fact smooth functions everywhere except possibly a single point, where they may contain an atom.

We are now in position to state our main result. We point out that in fact it covers all operators of the form~\eqref{eq:operator:general}, as long as the equation $L u = 0$ makes sense; see the discussion above and in~\cite{k22}.

\begin{theorem}
\label{thm:main}
For every point $(x, y)$, the Poisson kernel $P_{(x, y)}(x')$ for the operator $L$ given by~\eqref{eq:operator} is a weakly bell-shaped function of $x'$. If $P_{(x, y)}(x')$ is a smooth function of $x'$, then it is bell-shaped.
\end{theorem}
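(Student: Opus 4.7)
The plan is to work on the Fourier side via~\eqref{eq:poisson:fourier} and reduce the theorem to the characterisation of weakly bell-shaped measures established in~\cite{ks22}. Since (weak) bell-shapedness is invariant under translation in the $x'$ variable, and since $P_{(x, y)}(x') = P_y(x - x')$, the task is to prove that for every fixed $y \in (0, R)$ the measure $P_y$ appearing in~\eqref{eq:poisson} is weakly bell-shaped. The characterisation in~\cite{ks22} phrases this property entirely in terms of the analytic structure of the characteristic function of $P_y$, which by~\eqref{eq:poisson:fourier} is $\xi \mapsto \ph_\xi(y)$: roughly, a holomorphic extension off the imaginary axis, a Stieltjes-type representation of the logarithmic $\xi$-derivative, and appropriate boundary behaviour as $\re \xi \to \pm\infty$ and as $\xi$ approaches $i\R$.

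The next step is to extract these analytic properties from the second-order ODE in $y$ (with $\xi$ as a complex parameter) satisfied by $\ph_\xi$, as recorded in~\cite{k22}. Its coefficients are at most quadratic in $\xi$, and the associated $y$-evolution is a canonical system of generalised Krein-string type — self-adjoint when $b \equiv 0$, and non-self-adjoint otherwise, handled through the spectral theory of de~Branges and Eckhardt--Kostenko used in~\cite{k22}. In the symmetric case $b \equiv 0$, $\ph_\xi(y)$ depends only on $\lambda = -\xi^2$, and is the usual Weyl-type solution of the string, holomorphic and zero-free in $\lambda \in \C \setminus [0, \infty)$, with logarithmic derivative in $\lambda$ a Stieltjes function given by the principal spectral measure. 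To handle the asymmetric case I would begin with the gauge substitution
\[
\tilde\ph_\xi(y) = \ph_\xi(y) \exp\!\Bigl(-i\xi \int_0^y b(s)\,ds\Bigr),
\]
which partially symmetrises the ODE and brings it into a form amenable to the generalised string machinery of~\cite{ek16}. With the appropriate spectral/Nevanlinna representation of $\ph_\xi(y)$ in hand, matching it to the Kwaśnicki--Simon conditions should be routine, and the second statement of the theorem (bell-shapedness in the smooth case) follows automatically from the definition of weak bell-shapedness.

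The main obstacle I expect is in the asymmetric case: establishing that $\ph_\xi(y) \ne 0$ for $\xi \in \C \setminus i\R$, and identifying the correct sign and support of the measure in the Nevanlinna representation of $\partial_\xi \log \ph_\xi(y)$. For self-adjoint strings the zero-free property is a classical consequence of Sturm--Liouville monotonicity, equivalently of monotonicity for the Riccati equation associated with the $y$-evolution. Adapting this monotonicity argument to $b \not\equiv 0$, and ensuring that what emerges matches the precise form of the Kwaśnicki--Simon characterisation, is likely where the bulk of the technical work will sit; everything else should reduce to fairly standard large-$\xi$ asymptotics for canonical systems.
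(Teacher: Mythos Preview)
Your reduction is correct up to the point where you say ``matching it to the Kwa\'snicki--Simon conditions should be routine'': that is exactly where the content of the proof lies, and your proposal does not supply it. The gauge substitution (with the opposite sign, incidentally) does transform the $y$-ODE into the Schr\"odinger form $\tilde\ph_\xi'' = (\xi^2 a + i\xi b')\tilde\ph_\xi$, which is the indefinite-string problem underlying~\cite{ek16,k22}; but what the Eckhardt--Kostenko machinery produces is the Nevanlinna/Rogers structure of the Weyl coefficient at the boundary $y=0$, i.e.\ of $\psi(\xi)$ in~\eqref{eq:psi}, not of $\partial_\xi\log\ph_\xi(y)$ at an interior point. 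Your claim that in the symmetric case the logarithmic $\lambda$-derivative of the Weyl solution at a fixed interior $y$ is Stieltjes ``given by the principal spectral measure'' conflates the $m$-function with the solution itself; even there this needs an argument. In the non-symmetric case you offer no mechanism for locating the representing measure, and Riccati monotonicity alone will not pin down its sign and support.

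The paper avoids the Fourier-analytic characterisation altogether and uses the decomposition form (Corollary~1.9 in~\cite{ks22}, Theorem~\ref{thm:bell} here): weakly bell-shaped $=$ P\'olya frequency function $*$ $\amcm$. The matching factorisation of $\ph_\xi(y)$ is produced by the one idea your plan is missing: \emph{split the ODE at the point $y$ itself}. Running the reversed problem on $[0,y]$ and the original problem on $[y,R)$ gives
\[
 \ph_\xi(y) \;=\; \frac{1}{\check\ph_\xi^{\dir}(y)}\cdot\frac{1}{2\check\psi(\xi)+2\hat\psi(\xi)},
\]
see~\eqref{eq:fact}. The first factor is the reciprocal of the Dirichlet fundamental solution of the reversed problem; a Riccati-type monotonicity argument (Lemma~\ref{lem:ph:dir:nonzero}, which is where your intuition does land correctly) shows all its zeros lie on $i\R$, and Hadamard factorisation then identifies it as the Fourier transform of a P\'olya frequency function. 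The second factor is the reciprocal of a Rogers function with strictly positive value at $0$ (since $\check\psi(0)=1/(2y)$), hence the Fourier transform of an integrable $\amcm$ function by Proposition~\ref{prop:rogers:potential}. Without this splitting, I do not see how your programme reaches the required structure.
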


Theorem~\ref{thm:main} provides a rather unexpected link between the theory of bell-shaped functions on one hand, and the harmonic extension technique (together with the underlying spectral theory) on the other one. On a superficial level there does not seem to be a clear connection between these two areas, and no intuitive explanation for validity of Theorem~\ref{thm:main} seems to be available.

Our result is motivated by its special case given by Jedidi and Simon in~\cite{js15}, who exploited a factorisation theorem proved by Yamazato in~\cite{y90}. All these results are phrased in probabilistic terms, similar to those used in Corollary~\ref{cor:main} below.

The proof of Theorem~\ref{thm:main} depends in a crucial way on the description of the class of bell-shaped functions given in~\cite{k20,ks22} and the theory of solutions of the equation $L u = 0$ developed in~\cite{k22}. Our strategy is as follows. We factorise the Fourier transform of the Poisson kernel into two terms, see~\eqref{eq:fact}. Then we identify one of them with a Pólya frequency function in Proposition~\ref{prop:ph:dir:pf}, and the other one with the Fourier transform of an absolutely monotone-then-completely monotone (or $\amcm$) function in Proposition~\ref{prop:rogers:potential}. These are the most technical steps of the proof, involving some complex-analytic techniques. With these auxiliary results at hand, the main theorem follows directly from the characterisation of bell-shaped functions given in~\cite{ks22}.

\begin{remark}
\label{rem:converse}
It is an interesting problem to describe the class of bell-shaped distributions that can be obtained as Poisson kernels in Theorem~\ref{thm:main}. This question was studied for one-sided bell-shaped functions in the context of Remark~\ref{rem:hitting} by Yamazato, who gave a somewhat inexplicit answer in~\cite{y90,y92}. A similar analysis can likely be carried out in the present more general setting, using the factorisation~\eqref{eq:fact} of the Fourier transform $\ph_\xi(y)$ of the Poisson kernel, but this would bring us beyond the scope of this paper.

The above comment motivates the following conjecture: the class of Poisson kernels described by Theorem~\ref{thm:main} is dense in the class of all bell-shaped distributions (in the sense of both pointwise convergence and convergence in total variation).
\end{remark}

\subsection{Probabilistic reformulation}
\label{sec:probability}

A probabilistic counterpart of the results of~\cite{k22} was given in~\cite{k23}: the Dirichlet-to-Neumann operator $K$ is the generator of the boundary trace process of the diffusion $(X(t), Y(t))$ generated by $L$; see~\cite{k} for further discussion. In a similar way, Theorem~\ref{thm:main} can be rephrased in probabilistic terms. In Proposition~\ref{prop:fourier} below we prove that the Poisson kernel $P_{(x, y)}(x')$ is the density function of the distribution of $X(t)$ at the hitting time $T_0$ of the boundary, defined by
\formula{
 T_0 & = \inf\{t \ge 0 : Y_t = 0\} .
}
Thus, Theorem~\ref{thm:main} almost immediately implies the following result.

\begin{corollary}
\label{cor:main}
Given an arbitrary initial value $(X(0), Y(0)) = (x, y)$, the distribution of $X(T_0)$ is weakly bell-shaped.
\end{corollary}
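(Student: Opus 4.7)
The plan is to invoke Theorem~\ref{thm:main} after translating it into probabilistic language via Proposition~\ref{prop:fourier}. That proposition is announced as identifying the Poisson kernel $P_{(x, y)}(x')$, regarded as a measure in the variable $x'$, with the distribution of $X(T_0)$ under the initial condition $(X(0), Y(0)) = (x, y)$. Granting this identification, the corollary reduces to the statement that $P_{(x, y)}(\cdot)$ is a weakly bell-shaped measure on $\R$, which is exactly the content of Theorem~\ref{thm:main}. The implication is then a one-line cross-reference.

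The only case that warrants a separate comment is the degenerate situation in which the starting point lies on the boundary, say $y = 0$: then $T_0 = 0$ almost surely and the law of $X(T_0)$ is the Dirac mass $\delta_x$. This is still consistent with the conclusion, because the convolution of $\delta_x$ with the Gauss--Weierstrass kernel $(4 \pi t)^{-1/2} \exp(-(\cdot)^2 / (4 t))$ is a translated Gaussian, whose $n$th derivative is a Hermite polynomial of degree $n$ multiplied by the Gaussian, and hence changes sign exactly $n$ times. By the definition recalled in Section~\ref{sec:main}, $\delta_x$ is therefore weakly bell-shaped, so no separate argument is needed in this boundary case.

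I do not anticipate any genuine obstacle in this deduction; the substantive work sits in Theorem~\ref{thm:main} itself and in Proposition~\ref{prop:fourier}. The latter is where the real bridging has to happen: one must combine the analytic description~\eqref{eq:poisson:fourier}--\eqref{eq:poisson} of the Poisson kernel developed in~\cite{k22} with the construction of the diffusion $(X(t), Y(t))$ and of its boundary trace process from~\cite{k23}, in order to show that $\xi \mapsto \ph_\xi(y)$ is indeed the characteristic function of $X(T_0)$ when started from $(x, y)$ (up to the phase factor $e^{i \xi x}$). Once the Fourier-analytic and probabilistic pictures have been matched there, Corollary~\ref{cor:main} follows at once from Theorem~\ref{thm:main}.
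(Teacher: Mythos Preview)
Your proposal is correct and follows essentially the same route as the paper: the paper's entire proof of Corollary~\ref{cor:main} is the single sentence ``Together with Theorem~\ref{thm:main}, Proposition~\ref{prop:fourier} clearly implies Corollary~\ref{cor:main},'' which is precisely the reduction you describe. Your extra remark on the degenerate boundary case $y=0$ is a harmless embellishment not present in the paper.
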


We mention here that the diffusion $(X(t), Y(t))$ belongs to the class of (continuous) Markov additive processes, or MAPs, with $X(t)$ being the additive part and $Y(t)$ playing the role of Markovian regulator.

Interestingly, in a parallel work~\cite{w} Wszoła proves an analogue of Corollary~\ref{cor:main} for discrete random walks.

\begin{remark}
\label{rem:hitting}
In the special case when $a(dy)$ vanishes and $b(y)$ is decreasing in~\eqref{eq:operator}, Corollary~\ref{cor:main} reduces to a known result for hitting times for one-dimension generalised diffusions. Indeed: if $B'(y) = b(y)$, then the process $\tilde{X}(t) = X(t) - B(Y(t)) + B(Y(0))$ is increasing. Denote by $T(s) = \sup\{t \ge 0 : \tilde{X}(t) \le s\}$ its (generalised, right-continuous) inverse. Then $Z(s) = Y(T(s))$ is called a \emph{generalised diffusion} (or a \emph{gap diffusion}). Furthermore, $X(\tau_0)$ is the hitting time of $0$ by $Z(s)$. Thus, Corollary~\ref{cor:main} describes the distribution of the hitting time for a one-dimensional generalised diffusion $Z(s)$. We refer to Section~5 in~\cite{k23} for further details and examples.

The special case of Corollary~\ref{cor:main} discussed above is essentially equivalent to the main theorem of~\cite{js15}, while the main ingredient of the proof, factorisation~\eqref{eq:fact} of the Fourier transform $\ph_\xi(y)$ of the Poisson kernel, is tantamount to the direct half of Theorem~1 in~\cite{y90}. Notably, \cite{b91,kw23} contain discrete analogues of these results.
\end{remark}

\subsection{Example}

As we have already mentioned, explicit expressions for the Poisson kernel $P_{(x, y)}(x')$ are extremely rare, the case~\eqref{eq:cs:poisson} studied by Caffarelli and Silvestre in~\cite{cs07} being the most notable and important exception. In fact, even the Fourier transform $\ph_\xi(y)$ of the Poisson kernel is rarely given by a closed-form expression; we refer to Section~5 in~\cite{k22} for a list of examples known to the author.

Below we extend the result of~\cite{cs07} and we give an expression for the Poisson kernel for operators $L$ which are homogeneous with respect to $(x, y)$. Following Section~5.6 in~\cite{k22}, we consider
\formula{
 L & = (p^2 + q^2) y^{2 / \mu - 2} \partial_{xx} - 2 q y^{1 / \mu - 1} \partial_{xy} + \partial_{yy} ,
}
where $p \ge 0$, $q \in \R$ and $\mu \in (0, 2)$. The corresponding Dirichlet-to-Neumann operator $K$ is a (non-symmetric) fractional derivative of order $\mu$. It was proved in~\cite{k22} that the Fourier transform $\ph_\xi(y)$ of the Poisson kernel is given by an appropriate hypergeometric function. Inversion of this Fourier transform is possible using, for example, entry~(3.2.12) in~\cite{bateman}; we choose, however, a more direct approach. By homogeneity, we expect that the Poisson kernel is given by
\formula{
 P_{(x, y)}(x') & = y^{-1 / \mu} P(y^{-1 / \mu} (x' - x))
}
for an appropriate function $P$. Since, for a fixed $x'$, $u(x, y) = P_{(x, y)}(x')$ is a solution of $L u = 0$, we obtain
\formula{
 (x^2 - 2 \mu q x + \mu^2 (p^2 + q^2)) P''(x) + ((3 + \mu) x - 4 \mu q) P'(x) + (\mu + 1) P(x) & = 0 .
}
This second order ODE has an explicit solution. Taking into account the boundary conditions, for $p > 0$ we obtain
\formula{
 P(x) & = C \exp\biggl(\frac{(1 - \mu) q}{p} \arctan \frac{x - \mu q}{\mu p}\biggr) \, \frac{1}{(\mu^2 p^2 + (x - \mu q)^2)^{(\mu + 1) / 2}} ,
}
where $C$ is a normalisation constant,
\formula{
 C & = \frac{(2 \mu p)^\mu}{2 \pi \Gamma(\mu)} \, \biggl| \Gamma\biggl(\frac{1 + \mu}{2} + \frac{(1 - \mu) q i}{2 p}\biggr) \biggr|^2 ;
}
we omit the rather technical details of the above calculation. For $p = 0$ and $\mu \ne 1$ the solution is somewhat simpler: we have
\formula{
 P(x) & = C \exp\biggl(-\frac{(1 - \mu) \mu p}{x - \mu q}\biggr) \, \frac{1}{\lvert x - \mu q \rvert^{\mu + 1}} \, \ind_{(0, \infty)}\biggl(\frac{x - \mu q}{1 - \mu}\biggr) ,
}
with the normalisation constant
\formula{
 C & = \frac{(\lvert 1 - \mu \rvert \mu p)^\mu}{\Gamma(\mu)} \, ;
}
once again we omit the details. Theorem~\ref{thm:main} implies that the above functions are bell-shaped.

\subsection{Structure of the paper}

In Section~\ref{sec:pre}, we collect properties of various classes of functions used throughout the paper. We also prove an auxiliary result about Rogers functions (Proposition~\ref{prop:rogers:potential}). The spectral problem of Eckhardt and Kostenko is discussed in Section~\ref{sec:ode}. In this part we prove further properties of solutions of the spectral ODE (Propositions~\ref{prop:ph:dir:pf} and~\ref{prop:ph:dir:bs}), which are the main technical ingredients needed for Theorem~\ref{thm:main}. With these tools in place, the proof of Theorem~\ref{thm:main} reduces to a few lines, which make up the brief Section~\ref{sec:pde}. Finally, Section~\ref{sec:prob} provides the proof of Corollary~\ref{cor:main}, which, while also very brief, requires a number of probabilistic concepts that we recall from~\cite{k23}.

%
%

\section{Preliminaries}
\label{sec:pre}

We use $x, y$ for (real) spatial variables, $\xi$ for the (real or complex) Fourier variable, $r, s, t$ for auxiliary real variables, and $a, b, c, R, \lambda, \zeta$ for parameters or coefficients. We often write, for example, `a function $f(x)$' instead of more formal `a function $f$' to emphasise that $f$ is a function of the spatial variable $x$. By $\ph_\xi(y)$ we denote the solution of the underlying ODE, where $\xi$ is the spectral parameter, and $\psi(\xi)$ denotes the corresponding Rogers function; note that $2 \psi(\xi) / \xi$ is often called the \emph{principal Weyl--Titchmarsh function} for the corresponding spectral problem. By $\nu(x)$ we denote the $\amcm$ function in the integral (Lévy--Khintchine) representation of $\psi(\xi)$ of the Lévy measure. Section~\ref{sec:prob} additionally introduces some standard probabilistic notation.

A function is \emph{smooth} if it is infinitely differentiable. All functions and measures are assumed to be Borel. By $\fourier f(\xi)$ we denote the Fourier transform of an integrable function $f(x)$,
\formula{
 \fourier f(\xi) & = \int_{-\infty}^\infty e^{-i \xi x} f(x) dx ,
}
and similarly $\fourier \mu(\xi)$ stands for the Fourier transform of a finite measure $\mu$.

In this section we briefly discuss completely and absolutely monotone functions, Pólya frequency functions and Rogers functions. We only recall properties needed later, and for a more comprehensive introduction to the theory of these objects in the present context, we refer to~\cite{k20,ks22}. A general account on completely monotone functions and other closely related classes of functions can be found in the book~\cite{ssv12} by Schilling, Song and Vondraček. Pólya frequency functions are covered in Karlin's monograph~\cite{k68}, while a broader discussion of Rogers functions can be found in the author's paper~\cite{k19:fluctuation}.

\subsection{Completely and absolutely monotone functions}

A function $f(x)$ is \emph{completely monotone} on $(0, \infty)$ if it is smooth on $(0, \infty)$ and its derivatives have alternating signs: $(-1)^n f^{(n)}(x) \ge 0$ for $x \in (0, \infty)$ and $n = 0, 1, 2, \ldots$\, By Bernstein's theorem, $f$ is completely monotone on $(0, \infty)$ if and only if $f$ is the Laplace transform of a nonnegative measure $\mu(ds)$ on $[0, \infty)$, that is,
\formula{
 f(x) & = \int_{[0, \infty)} e^{-s x} \mu(ds) ,
}
where it is assumed that the integral converges for every $x \in (0, \infty)$.

Absolutely monotone functions have all derivatives positive: $f(x)$ is \emph{absolutely monotone} on $(-\infty, 0)$ if $f$ is smooth on $(-\infty, 0)$ and $f^{(n)}(x) \ge 0$ for $x \in (0, \infty)$ and $n = 0, 1, 2, \ldots$\, Clearly, $f(x)$ is absolutely monotone on $(-\infty, 0)$ if and only if $f(-x)$ is completely monotone on $(0, \infty)$.

Finally, we say that $f(x)$ is \emph{absolutely monotone-then-completely monotone}, or $\amcm$ in short, if $f(x)$ is absolutely monotone on $(-\infty, 0)$ and completely monotone on $(0, \infty)$. In fact, in this case we allow $f$ to have an additional (nonnegative) atom at $0$. Thus, strictly speaking, we consider $\amcm$ measures $f(dx) = c \delta_0(dx) + f(x) dx$, rather than functions.

The notion of $\amcm$ functions appears naturally in the study of bell-shaped functions, and the name was introduced in~\cite{k20}. However, Lévy processes with Lévy measure having $\amcm$ density function were studied already by L.\,C.\,G.~Rogers in~\cite{r83}; see Section~\ref{sec:pre:rogers}.

\subsection{Pólya frequency functions}

We say that a function $f(x)$ is a \emph{Pólya frequency function} if for every finite increasing sequence $x_1, x_2, \ldots, x_n$ the matrix $(f(x_i - x_j) : i, j = 1, 2, \ldots, n)$ is totally positive, that is, has all minors nonnegative. We are only concerned with integrable Pólya frequency functions, which are known to have a particularly simple form: up to normalisation, they can be obtained as convolutions of a finite or infinite collection of exponential distributions, and a normal distribution. This is formally stated in terms of the Fourier transform in the following result, where, for simplicity, we agree that nonnegative measures concentrated at a single point are also Pólya frequency functions.

\begin{theorem}[Theorem~7.3.2(a) in~\cite{k68}]
\label{thm:pff}
An integrable function $f(x)$ is a Pólya frequency function if and only if
\formula{
 \fourier f(\xi) & = e^{-a \xi^2 + i b \xi + c} \prod_n \frac{e^{-i \lambda_n \xi}}{1 - i \lambda_n \xi}
}
for some uniquely determined constants $a \ge 0$, $b \in \R$, $c \in \R$, and a finite or infinite sequence of nonzero real numbers $\lambda_n$ such that $\sum_n \lambda_n^2 < \infty$.
\end{theorem}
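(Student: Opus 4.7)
I would handle the two implications separately, using the standard bridge between total positivity, the bilateral Laplace transform, and Hadamard factorisation of entire functions of finite order.

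The sufficiency direction is the easier half. Each factor on the right-hand side corresponds to an elementary Pólya frequency function: $e^{-a \xi^2}$ is (up to normalisation) the Fourier transform of a Gaussian, $e^{i b \xi + c}$ corresponds to a scaled Dirac mass, and each factor $(1 - i \lambda_n \xi)^{-1} e^{-i \lambda_n \xi}$ is the Fourier transform of a one-sided exponential density translated so as to be centred. That exponentials and Gaussians are Pólya frequency functions is classical, following from direct Vandermonde-type computations. The class is closed under convolution: the Cauchy--Binet formula expresses a minor of the convolution kernel as a nonnegative combination of products of minors of the two factors. Hence any finite product on the Fourier side yields a Pólya frequency function, and the condition $\sum_n \lambda_n^2 < \infty$ guarantees convergence of the infinite convolution in $L^1$; since total positivity passes to pointwise limits, this completes the sufficiency argument.

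For the necessity direction I would first establish that the bilateral Laplace transform $\tilde f(s) = \int_{-\infty}^\infty e^{-s x} f(x) dx$ is finite in a maximal open strip $-\alpha < \re s < \beta$ containing the imaginary axis, and that $1/\tilde f$ extends to an entire function $E$ of order at most $2$ with only real zeros (that is, an element of the Laguerre--Pólya class). The mechanism is that total positivity of the translation kernel $f(x - y)$ translates, via Cramer-type determinantal identities combined with Laplace inversion, into positivity conditions on expressions built from $\tilde f$ at shifted arguments, which rule out nonreal zeros of $E$ and ensure that the poles of $1/E$ are simple. Hadamard factorisation applied to such $E$ then yields
\formula{
 E(s) & = e^{a s^2 + \beta s + \gamma} \prod_n (1 + \lambda_n s) \, e^{-\lambda_n s} ,
}
with $a \ge 0$, real $\beta, \gamma$, and nonzero real $\lambda_n$ satisfying $\sum_n \lambda_n^2 < \infty$. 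Substituting $s = -i \xi$ and reciprocating produces the claimed product formula for $\fourier f(\xi)$, after the cosmetic relabelling $b = i \beta$, $c = -\gamma$.

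The main obstacle is the passage from the combinatorial total positivity hypothesis to Laguerre--Pólya membership of $E$. Karlin handles this by first reducing to one-sided Pólya frequency functions (via a translation and a decomposition based on the support of $f$) and then invoking Schoenberg's theorem, whose proof in turn approximates $f$ by convolutions of finitely many exponentials and uses Hurwitz's theorem to control the limiting zero set of $1/\tilde f$. I would follow the same route. Uniqueness of the parameters is then essentially automatic: $a$ is read off from the quadratic behaviour of $-\log \fourier f$ at the origin, $b$ and $c$ from its linear and constant terms, and the multiset $\{\lambda_n\}$ is recovered as the set of reciprocals of the zeros of the meromorphic extension of $1/\fourier f$.
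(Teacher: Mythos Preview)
The paper does not prove this theorem at all: it is quoted verbatim as Theorem~7.3.2(a) from Karlin's monograph~\cite{k68} and used as a black box in the proof of Proposition~\ref{prop:ph:dir:pf}. There is therefore no ``paper's own proof'' to compare against; your proposal is a sketch of the classical Schoenberg--Karlin argument that the paper simply cites.

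That said, your outline is broadly the right one, but there is a sign muddle in the necessity half that you should clean up. With the paper's Fourier convention $\fourier f(\xi) = \int e^{-i\xi x} f(x)\,dx$ and your bilateral Laplace transform $\tilde f(s) = \int e^{-sx} f(x)\,dx$, one has $\fourier f(\xi) = \tilde f(i\xi)$, not $\tilde f(-i\xi)$. More importantly, if $E = 1/\tilde f$ factorises as $e^{a s^2 + \beta s + \gamma}\prod_n (1+\lambda_n s)e^{-\lambda_n s}$ with $a \ge 0$, then $1/E(i\xi)$ carries the factor $e^{+a\xi^2}$, which grows rather than decays; the correct Laguerre--P\'olya form here has the quadratic term with the opposite sign, i.e.\ $E(s) = e^{-a s^2 + \beta s + \gamma}\prod_n(1+\lambda_n s)e^{-\lambda_n s}$ with $a \ge 0$. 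Your relabelling ``$b = i\beta$'' is a symptom of the same confusion, since it would make $b$ imaginary. These are bookkeeping errors rather than structural ones, but as written the formula you derive does not match the statement.
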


We remark that $1 / \fourier f(\xi)$ extends to an entire function, which has only imaginary zeroes, and $1 / \fourier f(\xi)$ can be approximated by polynomials with the same property; that is, $1 / \fourier f(i \xi)$ is a function in the \emph{Pólya--Laguerre class}. For a detailed treatment of Pólya frequency functions, we refer to Chapter~7 in~\cite{k68}.

\subsection{Rogers functions}
\label{sec:pre:rogers}

We say that a function $\psi(\xi)$ holomorphic in the right complex half-plane $\re \xi > 0$ is a \emph{Rogers function} if $\re (\psi(\xi) / \xi) \ge 0$ for every $\xi \in \C$ such that $\re \xi > 0$. More generally, we say that a function defined initially on $(0, \infty)$ is a Rogers function if it extends to a holomorphic function with the above property, and we use the same symbol for the holomorphic extension.

The name \emph{Rogers function} was introduced by the author in~\cite{k19:fluctuation}, but the concept goes back to the work of Rogers~\cite{r83}. The following result asserts that Rogers functions are characteristic exponents of certain Lévy processes; namely, those with Lévy measure having an $\amcm$ density function. This class is called \emph{Lévy processes with completely monotone jumps} in~\cite{k19:fluctuation}.

\begin{proposition}[Theorem~3.3(a--b) and equation~(2.2) in~\cite{k19:fluctuation}]
\label{prop:rogers:levy}
Every Rogers function $\psi(\xi)$ is given by
\formula[eq:rogers:levy]{
 \psi(\xi) & = a \xi^2 - i b \xi + c + \int_{-\infty}^\infty (1 - e^{i \xi x} + i \xi (1 - e^{-\lvert x \rvert}) \sign x) \nu(x) dx
}
for some uniquely determined constants $a \ge 0$, $b \in \R$, $c \ge 0$ and a unique $\amcm$ function $\nu(x)$ which satisfies the integrability condition $\int_{-\infty}^\infty \min\{1, x^2\} \nu(x) dx < \infty$.
\end{proposition}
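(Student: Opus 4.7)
The plan is to derive the representation from the Nevanlinna--Herglotz integral formula on the upper half-plane, and then to identify the resulting spectral measure with an $\amcm$ density via Bernstein's theorem. Since $\psi$ is a Rogers function, the auxiliary map $H(w) = -\psi(-iw)/w$ is holomorphic on the upper half-plane $\{\im w > 0\}$ and satisfies $\im H(w) \ge 0$ there: the Rogers condition $\re(\psi(\xi)/\xi) \ge 0$ on $\{\re \xi > 0\}$ translates exactly to $\im H \ge 0$ via $\xi = -iw$. Hence $H$ is a classical Herglotz function and admits the Nevanlinna representation
\[
 H(w) = \beta_0 + a w + \int_\R \Bigl(\frac{1}{t - w} - \frac{t}{1 + t^2}\Bigr) \sigma(dt),
\]
with $\beta_0 \in \R$, $a \ge 0$, and $\sigma$ a nonnegative Borel measure on $\R$ satisfying $\int (1+t^2)^{-1} \sigma(dt) < \infty$, all uniquely determined by $H$. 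Substituting $w = i\xi$ and using $\psi(\xi) = -i\xi H(i\xi)$ yields
\[
 \psi(\xi) = a \xi^2 - i \beta_0 \xi + \int_\R \Bigl(\frac{-i\xi}{t - i\xi} + \frac{i\xi t}{1 + t^2}\Bigr) \sigma(dt);
\]
the integrand is bounded near $t = 0$ and decays like $|t|^{-2}$ at infinity, so the integral converges absolutely, and an atom of $\sigma$ at the origin contributes the constant $c = \sigma(\{0\}) \ge 0$.

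The next step uses the Fourier identity $\frac{-i\xi}{t - i\xi} = \int_0^\infty (1 - e^{i\xi x}) t e^{-tx} dx$ for $t > 0$, and the analogous identity for $t < 0$ with integration on $(-\infty, 0)$ and density $|t| e^{|t| x}$. Adding and subtracting the Lévy--Khintchine drift cutoff $i\xi (1 - e^{-|x|}) \sign x$ inside each kernel turns the Nevanlinna compensator $\tfrac{i\xi t}{1 + t^2}$ into the sum of the desired Lévy--Khintchine integrand and a residual term linear in $\xi$ and bounded in $t$, which I would absorb into the final drift $b \in \R$. An application of Fubini then produces $\int_\R \bigl(1 - e^{i\xi x} + i\xi(1 - e^{-|x|})\sign x\bigr) \nu(x) dx$ with
\[
 \nu(x) = \begin{cases} \int_{(0, \infty)} t e^{-tx} \sigma_+(dt), & x > 0, \\ \int_{(-\infty, 0)} |t| e^{|t| x} \sigma_-(dt), & x < 0, \end{cases}
\]
where $\sigma_\pm$ are the restrictions of $\sigma$ to the two half-lines. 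Bernstein's theorem then shows that $\nu$ is completely monotone on $(0, \infty)$ and absolutely monotone on $(-\infty, 0)$, i.e.\ $\amcm$; the integrability $\int \min\{1, x^2\} \nu(x) dx < \infty$ follows from $\int (1+t^2)^{-1} \sigma(dt) < \infty$ by another Fubini computation.

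The main obstacle is the careful bookkeeping of the two compensators: the Nevanlinna counterterm $t/(1 + t^2)$ and the Lévy--Khintchine cutoff $(1 - e^{-|x|}) \sign x$ differ by a term that must be absorbed into the real drift $b$, and one must verify the absolute convergence of the resulting correction $\int_\R \bigl(\tfrac{t}{1+t^2} - \tfrac{\sign t}{1+|t|}\bigr) \sigma(dt)$ (which uses $\int (1+t^2)^{-1} \sigma(dt) < \infty$ combined with the finiteness of $\sigma$ in a neighbourhood of the origin). A further subtlety is that the Lévy--Khintchine integral is not in general absolutely convergent for complex $\xi$ in the right half-plane with nonzero imaginary part, so the identity is first established for real $\xi > 0$ and then extended to the whole right half-plane by analyticity. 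Uniqueness of the quadruple $(a, b, c, \nu)$ is finally inherited from uniqueness in the Nevanlinna representation and in Bernstein's theorem.
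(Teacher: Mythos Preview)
The paper does not prove this proposition: it is quoted as a preliminary result from~\cite{k19:fluctuation} (Theorem~3.3(a--b) and equation~(2.2) there), with no argument supplied in the present article. Your sketch is essentially the standard route to this representation, and indeed matches the approach taken in the cited reference, where the equivalence between the Rogers condition and a Nevanlinna--Herglotz representation of $\psi(\xi)/\xi$ is precisely the content of Theorem~3.3. The steps you outline --- passage to $H(w) = -\psi(-iw)/w$, the Herglotz integral formula, extraction of the atom at $0$ as the constant $c$, the exponential kernel identity $\tfrac{-i\xi}{t-i\xi} = \int_0^\infty (1-e^{i\xi x})\, t e^{-tx}\,dx$, and the identification of $\nu$ via Bernstein --- are all correct, and the compensator bookkeeping you flag as the main obstacle is genuine but routine: the difference $\tfrac{t}{1+t^2} - \tfrac{1}{1+|t|}\sign t$ is $O(|t|^{-2})$ at infinity and bounded near the origin, hence $\sigma$-integrable. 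Your remark about establishing the identity first for real $\xi > 0$ and then extending by analyticity is also the right way to handle the complex half-plane.
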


For other integral representations of Rogers functions, we refer to the full statement of Theorem~3.3 in~\cite{k19:fluctuation}. Among various properties of Rogers functions given in that paper, we will only need the following one.

\begin{proposition}[Proposition~3.12(b) in~\cite{k19:fluctuation}]
\label{prop:rogers:dual}
If $\psi(\xi)$ is a Rogers function, then either $\psi(\xi)$ is constant zero or $\xi^2 / \psi(\xi)$ is a Rogers function.
\end{proposition}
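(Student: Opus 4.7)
The plan is to verify the two defining properties for $\xi^2/\psi(\xi)$ to be a Rogers function: holomorphy on the open right half-plane $\{\re \xi > 0\}$, and the positivity condition $\re((\xi^2/\psi(\xi))/\xi) = \re(\xi/\psi(\xi)) \ge 0$ there. The positivity is pointwise automatic: for any nonzero complex number $w$ with $\re w \ge 0$ one has $\re(1/w) = (\re w)/\lvert w \rvert^2 \ge 0$, so applying this to $w = \psi(\xi)/\xi$ gives $\re(\xi/\psi(\xi)) \ge 0$ wherever $\psi(\xi) \ne 0$. No integral representation is required for this part.

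The only real obstacle is to show that if $\psi$ is not identically zero, then $\psi$ has no zeros in the open right half-plane. I would argue by contradiction using the open mapping theorem applied to $g(\xi) := \psi(\xi)/\xi$, which is holomorphic on $\{\re\xi > 0\}$ and, by the definition of a Rogers function, takes values in the closed right half-plane $\{\re w \ge 0\}$. Suppose $\psi(\xi_0) = 0$ for some $\xi_0$ with $\re \xi_0 > 0$; then $g(\xi_0) = 0$. If $g$ is not identically zero (equivalently, $\psi \not\equiv 0$, since $\xi \ne 0$ throughout the domain), then the open mapping theorem forces the image under $g$ of any neighborhood of $\xi_0$ to contain a neighborhood of $0$, and such a neighborhood necessarily meets the open left half-plane $\{\re w < 0\}$, contradicting the Rogers condition. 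Hence $\psi$ is nonvanishing on $\{\re \xi > 0\}$.

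With $\psi$ nonvanishing there, the quotient $\xi^2/\psi(\xi)$ is holomorphic on $\{\re\xi > 0\}$, and combined with the pointwise calculation above this yields the Rogers property. The whole argument is short because the difficulty is concentrated entirely in the open mapping step, which reduces to the familiar principle that a nonconstant holomorphic function taking values in a closed half-plane cannot attain a boundary value of that half-plane at an interior point of its domain.
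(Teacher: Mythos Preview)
Your argument is correct. The map $g(\xi) = \psi(\xi)/\xi$ is holomorphic on the connected open set $\{\re\xi>0\}$ and takes values in the closed half-plane $\{\re w\ge 0\}$; if $g(\xi_0)=0$ and $g$ is non-constant, the open mapping theorem produces points with $\re g<0$, a contradiction. (Your phrasing invokes ``not identically zero'' where the open mapping theorem needs ``non-constant'', but since you are also assuming $g(\xi_0)=0$, a constant $g$ would force $g\equiv 0$, so the logic stands.) The positivity step $\re(1/w)\ge 0$ for $\re w\ge 0$, $w\ne 0$, then finishes the job.

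There is nothing to compare against in this paper: the proposition is quoted without proof from \cite{k19:fluctuation} and used as a black box. Your short self-contained argument is essentially the natural one --- the observation that holomorphic self-maps of a closed half-plane are stable under reciprocation, applied to $\psi(\xi)/\xi$ --- and would serve perfectly well as an inline proof here.
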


From~\eqref{eq:rogers:levy} it follows that for every Rogers function $\psi(\xi)$ a finite, nonnegative limit $\psi(0^+) = c$ exists; see Proposition~3.14 in~\cite{k19:fluctuation}. We now prove the following auxiliary result, which describes the case when the limit $\psi(0^+)$ is nonzero.

\begin{proposition}
\label{prop:rogers:potential}
If $\psi(\xi)$ is a Rogers function with $\psi(0^+) > 0$, then $1 / \psi(\xi)$ is the Fourier transform of an integrable $\amcm$ function.
\end{proposition}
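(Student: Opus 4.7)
The plan is to interpret $1/\psi$ probabilistically and then appeal to the Wiener--Hopf factorisation for L\'evy processes with completely monotone jumps. By Proposition~\ref{prop:rogers:levy}, $\psi$ is the characteristic exponent of a L\'evy process $X$ with $\amcm$ L\'evy density, killed at rate $c = \psi(0^+) > 0$. Writing $\psi = c + \psi_0$ with $\psi_0(0^+) = 0$, the reciprocal is the Fourier transform of the $c$-potential measure of the unkilled process:
\formula{
 \frac{1}{\psi(\xi)} & = \int_0^\infty e^{-ct}\,\ex[e^{i\xi X_t}]\,dt = \fourier U_c(\xi) \, .
}
This measure is finite, of total mass $1/c$, so it is enough to exhibit an $\amcm$ density for $U_c$.

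Next I would invoke the Wiener--Hopf identity: if $\tau$ is an independent exponential random variable with rate $c$, then $X_\tau = M - N$, where $M = \sup_{t \le \tau} X_t$ and $N = M - X_\tau$ are independent and nonnegative. For L\'evy processes with completely monotone jumps, the factorisation theorem going back to Rogers~\cite{r83} and recast in the language of Rogers functions in~\cite{k19:fluctuation} identifies the laws of both $M$ and $N$ as measures with a completely monotone density on $(0, \infty)$ together with a possible atom at~$0$. Consequently, the law of $X_\tau$ is the convolution of a measure $\mu_+$ on $[0, \infty)$ with completely monotone density on $(0, \infty)$ (plus possible atom at $0$) and a measure $\mu_-$ on $(-\infty, 0]$ with absolutely monotone density on $(-\infty, 0)$ (plus possible atom at $0$).

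It remains to verify that such a convolution is $\amcm$. Using Bernstein's representation, both densities are superpositions of one-sided exponentials, and by linearity the claim reduces to the elementary identity
\formula{
 (e^{-\lambda_+ x}\ind_{(0, \infty)}(x)) * (e^{\lambda_- x}\ind_{(-\infty, 0)}(x)) & = \frac{e^{-\lambda_+ x}\ind_{(0, \infty)}(x) + e^{\lambda_- x}\ind_{(-\infty, 0)}(x)}{\lambda_+ + \lambda_-} \, ,
}
whose right-hand side is manifestly $\amcm$; the atomic contributions yield additional one-sided monotone pieces and a single atom at $0$, which is again consistent with the $\amcm$ definition. Thus $U_c$ has an integrable $\amcm$ density with Fourier transform $1/\psi(\xi)$, as required.

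The main obstacle is invoking the sharp form of the Wiener--Hopf factorisation---the one-sided complete monotonicity of the ladder height distributions $M$ and $N$ for L\'evy processes with completely monotone jumps---which is the nontrivial ingredient drawn from~\cite{r83,k19:fluctuation}; once it is in place, the convolution calculation and the identification of $1/\psi$ as $\fourier U_c$ are routine.
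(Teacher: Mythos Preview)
Your argument is correct, and it is a genuinely different route from the paper's. The paper proceeds analytically: it applies Proposition~\ref{prop:rogers:dual} to the dual Rogers function $\xi^2/\psi(\xi)$, writes out its L\'evy--Khintchine representation, and by examining the behaviour of $\re(1/\psi(\xi))$ and $\im(1/\psi(\xi))$ as $\xi\to 0^+$ shows that the constant term vanishes, that $\int x^2\nu(x)\,dx<\infty$, and that the drift is determined; this yields the closed form
\formula{
 \frac{1}{\psi(\xi)} = a + \int_{-\infty}^\infty \frac{1 - e^{i\xi x} + i\xi x}{\xi^2}\,\nu(x)\,dx ,
}
from which the $\amcm$ density is exhibited explicitly as a second antiderivative of $\nu$. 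Your approach instead reads $1/\psi$ as the Fourier transform of the $c$-potential of the associated L\'evy process, factors the law of $X_\tau$ via Wiener--Hopf, and uses Rogers' theorem that for processes with completely monotone jumps the Wiener--Hopf factors are complete Bernstein functions, so that the distributions of $M$ and $N$ have completely monotone densities (plus possible atoms at $0$); the convolution identity for one-sided exponentials then finishes the job.

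The trade-off is clear. The paper's computation is self-contained within the two propositions it has already quoted from~\cite{k19:fluctuation} and produces an explicit expression for the $\amcm$ density. Your argument is more conceptual and explains \emph{why} the result holds---it is the potential-measure counterpart of Rogers' factorisation---but it imports a substantially deeper theorem (the complete-Bernstein structure of the Wiener--Hopf factors) as a black box. Two small points worth tidying if you write this up: with the paper's convention $\fourier f(\xi)=\int e^{-i\xi x}f(x)\,dx$, your displayed identity gives $1/\psi(\xi)=\fourier U_c(-\xi)$ rather than $\fourier U_c(\xi)$, which is harmless since reflection preserves $\amcm$; and the passage from ``Wiener--Hopf factors are complete Bernstein'' to ``$M$ and $N$ have completely monotone densities'' goes through the standard fact that $\phi(0)/\phi(\lambda)$ is Stieltjes when $\phi$ is complete Bernstein with $\phi(0)>0$, which you use implicitly.
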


In probabilistic terms, the above result states that the resolvent (or $\lambda$-potential) kernel of a Lévy process with completely monotone jumps is an $\amcm$ function. The converse is also true, with a very similar proof, but it will not be needed here.

\begin{proof}
\emph{Step 1.} Suppose that $\psi(\xi)$ is a Rogers function which is not constant zero. By Proposition~\ref{prop:rogers:dual}, $\xi^2 / \psi(\xi)$ is a Rogers function. Hence, by Proposition~\ref{prop:rogers:levy},
\formula{
 \frac{\xi^2}{\psi(\xi)} & = a \xi^2 - i b \xi + c + \int_{-\infty}^\infty (1 - e^{i \xi x} + i \xi (1 - e^{-\lvert x \rvert}) \sign x) \nu(x) dx
}
for some $a \ge 0$, $b \in \R$, $c \ge 0$ and an $\amcm$ function $\nu(x)$. It follows that
\formula{
 \frac{1}{\psi(\xi)} & = a + \frac{-i b \xi + c}{\xi^2} + \int_{-\infty}^\infty \frac{1 - e^{i \xi x} + i \xi (1 - e^{-\lvert x \rvert}) \sign x}{\xi^2} \, \nu(x) dx .
}
If $\xi > 0$, then
\formula{
 \re \frac{1}{\psi(\xi)} & = a + \frac{c}{\xi^2} + \int_{-\infty}^\infty \frac{1 - \cos(\xi x)}{\xi^2} \, \nu(x) dx .
}
Note that the integrand is nonnegative. Since the left-hand side has a finite limit as $\xi \to 0^+$, it follows that $c = 0$ and, by Fatou's lemma,
\formula[eq:rogers:square]{
 \int_{-\infty}^\infty \frac{x^2}{2} \, \nu(x) dx & \le \liminf_{\xi \to \infty} \int_{-\infty}^\infty \frac{1 - \cos(\xi x)}{\xi^2} \, \nu(x) dx < \infty .
}
Thus, the expression for $1 / \psi(\xi)$ becomes
\formula{
 \frac{1}{\psi(\xi)} & = a + \int_{-\infty}^\infty \frac{1 - e^{i \xi x} + i \xi x}{\xi^2} \, \nu(x) dx \\
 & \qquad + \frac{i}{\xi} \biggl(\int_{-\infty}^\infty (1 - \lvert x \rvert - e^{-\lvert x \rvert}) \sign x \, \nu(x) dx - b\biggr) ;
}
note that both integrals are absolutely convergent by~\eqref{eq:rogers:square}. Again we consider $\xi > 0$, and this time we investigate
\formula{
 \im \frac{1}{\psi(\xi)} & = \int_{-\infty}^\infty \frac{-\sin(\xi x) + \xi x}{\xi^2} \, \nu(x) dx \\
 & \qquad + \frac{1}{\xi} \biggl(\int_{-\infty}^\infty (1 - \lvert x \rvert - e^{-\lvert x \rvert}) \sign x \, \nu(x) dx - b\biggr) .
}
By the dominated convergence theorem, the first integral has a finite limit as $\xi \to 0^+$, and so we necessarily have
\formula{
 \int_{-\infty}^\infty (1 - \lvert x \rvert - e^{-\lvert x \rvert}) \sign x \, \nu(x) dx - b & = 0 .
}
We conclude that
\formula{
 \frac{1}{\psi(\xi)} & = a + \int_{-\infty}^\infty \frac{1 - e^{i \xi x} + i \xi x}{\xi^2} \, \nu(x) dx .
}

\emph{Step 2.} Define
\formula{
 v(x) & = \int_{-\infty}^x (x - s) \nu(s) ds && \text{if } x < 0 , \\
 v(x) & = \int_x^\infty (s - x) \nu(s) ds && \text{if } x > 0 .
}
Then $v''(x) = \nu(x)$ for $x \in \R \setminus \{0\}$, and so $v$ is $\amcm$. Additionally, by Fubini's theorem,
\formula{
 \int_{-\infty}^\infty v(x) dx = \int_{-\infty}^\infty \frac{s^2}{2} \, \nu(s) ds < \infty ,
}
that is, $v$ is integrable. Finally, again by Fubini,
\formula{
 \fourier v(-\xi) & = \int_{-\infty}^\infty v(x) e^{i \xi x} dx \\
 & = \int_{-\infty}^0 \int_s^0 (x - s) \nu(s) e^{i \xi x} dx ds + \int_0^\infty \int_0^s (s - x) \nu(s) e^{i \xi x} dx ds \\
 & = \int_{-\infty}^\infty \frac{1 - e^{i \xi x} + i \xi x}{\xi^2} \, \nu(s) ds .
}
Therefore, $1 / \psi(\xi)$ is the Fourier transform of an integrable $\amcm$ function $a \delta_0(dx) + v(-x) dx$, as desired.
\end{proof}

\subsection{Bell-shaped functions}

Recall that a smooth function $f(x)$ is \emph{bell-shaped} if it is nonnegative and its $n$th derivative $f^{(n)}$ changes sign exactly $n$ times for $n = 0, 1, 2, \ldots$\,, while a nonnegative function (or, more generally, a nonnegative measure) $f(x)$ is \emph{weakly bell-shaped} if the convolution of $f$ with the Gauss--Weierstrass kernel (that is, the density function of the normal distribution) is bell-shaped. We will need the following fundamental result about bell-shaped functions.

\begin{theorem}[Corollary~1.9 in~\cite{ks22}]
\label{thm:bell}
A function (or a measure) $f$ is weakly bell-shaped if and only if it is a convolution of an integrable Pólya frequency function and a locally integrable $\amcm$ function (or measure) which converges to $0$ at $\pm \infty$. If $f$ is additionally smooth, then $f$ is bell-shaped.
\end{theorem}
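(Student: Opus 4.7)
The plan treats the two directions of the biconditional separately.

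\textbf{Sufficiency.} Suppose $f = g * h$ with $g$ an integrable Pólya frequency function and $h$ a locally integrable $\amcm$ measure vanishing at $\pm\infty$. First I would fold a Gauss--Weierstrass kernel $\gamma_t$ into $g$: by Theorem~\ref{thm:pff} the convolution $g * \gamma_t$ is again a PF function (its Fourier transform just picks up the factor $e^{-t\xi^2}$), and it is smooth, so it suffices to prove bell-shape of $f*\gamma_t$ under this additional smoothness. Using Theorem~\ref{thm:pff} again, $g * \gamma_t$ decomposes as a (possibly infinite) convolution of a Gaussian and elementary one-sided exponential densities $e_\lambda(x) = \lambda^{-1} e^{-x/\lambda} \mathbf{1}_{(0,\infty)}(x)$ (with appropriate translations and reflections). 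I would then argue inductively that convolution with each such $e_\lambda$ takes a function whose $n$th derivative has sign-change count $k$ into one whose $n$th derivative has sign-change count $k+1$, starting from the $\amcm$ base case (where $h^{(n)}$ is nonnegative on $(-\infty,0)$, has sign $(-1)^n$ on $(0,\infty)$, and carries definite-sign jumps at $0$). Combined with the variation-diminishing property of integrable PF kernels (Chapter~7 of~\cite{k68}) for the limiting step and the final Gaussian smoothing, this yields the required sign-change count $n$ for $f^{(n)}$, with the matching lower bound coming from asymptotic analysis at $\pm\infty$ and the jump contributions at $0$.

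\textbf{Necessity.} Given $f$ weakly bell-shaped, I would first reduce to the smooth case by working with $f * \gamma_t$ and then letting $t \to 0^+$. The goal is to factor the Fourier transform as $\fourier f(\xi) = \fourier g(\xi) \, \fourier h(\xi)$, where $\fourier g$ has the Hadamard product form of Theorem~\ref{thm:pff} and $\fourier h$ is the Fourier transform of an $\amcm$ measure. The condition that $f^{(n)}$ has exactly $n$ sign changes for every $n$ should translate into strong analytic constraints on the complex extension of $\fourier f$. I would identify the Pólya--Laguerre component by locating purely imaginary zeros of $1/\fourier f$ and peeling them off one by one to build $\fourier g$. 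The residual factor should then be the Fourier transform of an $\amcm$ measure, which I would recognise via Rogers-function techniques from Section~\ref{sec:pre:rogers} --- in particular, an argument parallel in spirit to Proposition~\ref{prop:rogers:potential}, which already exhibits such a correspondence between reciprocals of Rogers functions and Fourier transforms of $\amcm$ measures.

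\textbf{Main obstacle.} The necessity direction is the decisive difficulty: there is no direct Fourier-analytic expression for the sign-change counts of $f^{(n)}$, so translating the geometric hypothesis into an analytic factorisation of $\fourier f$ requires substantial technical work. A natural route is to characterise the extreme rays of the convex cone of weakly bell-shaped measures --- with Gaussians, one-sided exponentials and $\amcm$ measures as the building blocks --- and then reconstruct arbitrary $f$ as a convolution of these extremals; this should directly produce the desired factorisation. The final smoothness addendum (smooth weakly bell-shaped implies bell-shaped) would then be immediate: in the constructed decomposition the $\amcm$ factor adds no singular contribution beyond a possible atom at $0$, and if $f$ is already smooth this atom must vanish, so all $n$ sign changes of $f^{(n)}$ are visible without further Gaussian smoothing.
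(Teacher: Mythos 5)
This statement is not proved in the paper at all: it is quoted verbatim as Corollary~1.9 of~\cite{ks22}, and the author explicitly notes that only the direct (``if'') half is used, which already appears in~\cite{k20}. So your proposal is an attempt to reprove a deep external theorem, and it should be judged as such. Your sufficiency sketch is at least aligned in spirit with~\cite{k20}: fold in the Gaussian, use Theorem~\ref{thm:pff} to write the P\'olya frequency factor as a (possibly infinite) convolution of exponentials, and track sign changes. But the key inductive claim --- that convolving with each one-sided exponential raises the sign-change count of \emph{every} derivative by exactly one, starting from the $\amcm$ base case --- is asserted, not proved; both the upper bound (at most $n$ sign changes, via total positivity) and the lower bound (at least $n$, via asymptotics at $\pm\infty$ and at the atom) require genuine work, and the passage to an infinite convolution is delicate because sign-change counts are not stable under pointwise limits without extra control. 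As written, this half is a plausible programme, not a proof.

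The necessity half contains a concrete error. You propose to view weakly bell-shaped measures as a convex cone and decompose along its extreme rays (Gaussians, one-sided exponentials, $\amcm$ measures). But the class of (weakly) bell-shaped functions is not convex: the sum of two narrow Gaussians with well-separated centres is bimodal, so already the zeroth derivative has too many sign changes. Hence the extreme-ray strategy collapses at the outset. Likewise, ``peeling off the purely imaginary zeros of $1/\fourier f$'' presupposes that $\fourier f$ extends to a function whose reciprocal is analytic with only imaginary zeros --- but establishing such analytic structure from the purely geometric hypothesis on sign changes is precisely the hard content of~\cite{ks22} (their Theorem~1.3), and your sketch offers no route to it. In short: the sufficiency direction is an incomplete outline of the known argument, and the necessity direction rests on a false structural assumption; neither matches anything in the present paper, which simply cites the result.
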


In fact, we only need the direct half of the above result, given already in~\cite{k20}; see Theorem~3.7, Lemma~5.4 and the proof of Theorem~1.1 therein. We remark that the main result of~\cite{ks22} additionally characterises the class of (weakly) bell-shaped functions in terms of the Fourier transform, see Theorem~1.3 therein; we will, however, not need this result here. For further properties and examples of bell-shaped functions, we refer to~\cite{k20,ks22}. Their discrete analogues, that is, bell-shaped sequences, are discussed in~\cite{kw23,kw,w}.

%
%

\section{Auxiliary ODE}
\label{sec:ode}

\subsection{Spectral problem of Eckhardt and Kostenko}

The following theorem was originally proved by Eckhardt and Kostenko in~\cite{ek16} (using de Branges's theory) in a slightly different form. We quote the variant given in~\cite{k22,k23}. More precisely, we follow closely the notation of~\cite{k23}, which differs from that of~\cite{k22} in three aspects. First, our function $\psi(\xi)$ is given by $\psi(\xi) = \tfrac{1}{2} k(\xi)$ with the notation of~\cite{k22}. Next, in~\cite{k22} the symbol $a(dy)$ denotes what is here $a(dy) + (b(y))^2 dy$. Finally, we use the unambiguous expression $\ph_\xi'(0^+) + a(\{0\}) \xi^2$ instead of $\ph_\xi'(0)$. For further discussion, we refer to the comment preceding Theorem~4.1 in~\cite{k23} and to the third paragraph in Section~2.4 in~\cite{k22}.

\begin{theorem}[Theorem~4.1 in~\cite{k23}; see also Theorem~3.1 in~\cite{k22}]
\label{thm:ode}
\mbox{}
\begin{enumerate}[label={\textnormal{(\alph*)}}]
\item\label{thm:ode:a}
Suppose that $R \in (0, \infty]$, $a(dy)$ is a locally finite nonnegative measure on $[0, R)$, and $b(y)$ is a locally square-integrable function on $[0, R)$. For every $\xi > 0$ there is a unique function $\ph_\xi(y)$ with the following properties:
\begin{itemize}
\item $\ph_\xi(y)$ is locally absolutely continuous on $[0, R)$;
\item $\ph_\xi'(y)$ is equal almost everywhere in $[0, R)$ to a function with locally bounded variation on $[0, R)$, so that the distributional derivative $\ph_\xi''$ corresponds to a locally finite measure in $[0, R)$;
\item $\ph_\xi(0) = 1$, $\ph_\xi(y)$ is bounded on $[0, R)$, and if $R \in (0, \infty)$, then $\ph_\xi(R^-) = 0$;
\item we have
\formula[eq:ode]{
 \frac{1}{2} \ph_\xi''(dy) & = \frac{1}{2} \, \xi^2 \ph_\xi(y) a(dy) + \frac{1}{2} \, \xi^2 (b(y))^2 \ph_\xi(y) dy - i \xi \ph_\xi'(y) b(y) dy
}
in $(0, R)$, in the sense of distributions.
\end{itemize}
Additionally,
\begin{itemize}
\item $\lvert \ph_\xi(y) \rvert^2$ is decreasing and convex on $[0, R)$;
\item $\lvert \ph_\xi'(y) \rvert$ is decreasing on $[0, R)$;
\item $\lvert \ph_\xi(y) b(y) \rvert^2 + \lvert \ph_\xi'(y) \rvert^2$ is integrable over $[0, R)$.
\end{itemize}
\item\label{thm:ode:b}
The function $\psi(\xi)$, defined on $(0, \infty)$ by the formula
\formula[eq:psi]{
 \psi(\xi) & = -\frac{1}{2} \, \ph_\xi'(0^+) + \frac{1}{2} \, a(\{0\}) \xi^2 ,
}
extends to a Rogers function.
\item\label{thm:ode:c}
Every Rogers function $\psi(\xi)$ can be represented as above in a unique way.
\end{enumerate}
\end{theorem}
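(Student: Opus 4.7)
My plan is to split the theorem into its three parts, which have rather different flavours, and to combine standard ODE techniques with the inverse spectral theory used by Eckhardt and Kostenko.

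For part~\ref{thm:ode:a}, I would rewrite~\eqref{eq:ode} as a first-order linear system for the pair $(\ph_\xi, \ph_\xi' - i\xi b \ph_\xi)$. The new ``momentum'' variable absorbs the Lebesgue-integrable term $-i\xi b \ph_\xi' dy$ into its definition, and the resulting system is of Carathéodory type with respect to the reference measure $dy + a(dy)$, so classical measure-ODE existence theory yields a unique local solution for each prescribed initial pair. The condition $\ph_\xi(0) = 1$ fixes one of the two degrees of freedom; the other is pinned down by a Weyl-type nesting-discs argument applied as $R' \nearrow R$ (with the limit-circle case resolved by imposing $\ph_\xi(R^-) = 0$ when $R < \infty$). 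Convexity of $|\ph_\xi|^2$ then follows from the distributional identity $(|\ph_\xi|^2)''(dy) = 2|\ph_\xi' + i\xi b \ph_\xi|^2 dy + 2\xi^2 |\ph_\xi|^2 a(dy)$, whose right-hand side is manifestly a nonnegative measure; combining convexity with boundedness (and with $\ph_\xi(R^-) = 0$ when $R < \infty$) yields monotonicity of $|\ph_\xi|^2$ and of $|\ph_\xi'|$, while integrating the same identity over $[0, R)$ delivers the integrability of $|\ph_\xi b|^2 + |\ph_\xi'|^2$.

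For part~\ref{thm:ode:b}, I would extend $\ph_\xi(y)$ holomorphically to $\{\re\xi > 0\}$ by running the existence theory of part~\ref{thm:ode:a} with complex $\xi$; uniform a priori estimates on compact subsets of the right half-plane, together with Morera's theorem, give holomorphy of $\psi(\xi)$ there. To verify the Rogers property $\re(\psi(\xi)/\xi) \ge 0$, I would recast~\eqref{eq:ode} as a canonical system $J \Psi'(y) = \xi H(y) \Psi(y)$ with Hamiltonian $H$ built from $a$ and $b$ and pointwise nonnegative definite; the required inequality is then the standard Herglotz property of the principal Weyl--Titchmarsh function $2 \psi(\xi) / \xi$. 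Concretely, this comes out of a Green-type identity obtained by testing~\eqref{eq:ode} against $\overline{\ph_\xi(y)}$ and integrating by parts on $[0, R)$, using the boundary conditions to discard every contribution other than the one producing $\psi(\xi)$, and then completing the square to extract a nonnegative quadratic form in $\ph_\xi$ and $\ph_\xi' + i\xi b \ph_\xi$.

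Part~\ref{thm:ode:c} is the genuinely deep direction, and I would defer to the Eckhardt--Kostenko inverse spectral theorem~\cite{ek16}: given a Rogers function $\psi$, the function $2\psi(\xi)/\xi$ is a Herglotz-type function on $\{\re\xi > 0\}$, so de Branges' theory of Hilbert spaces of entire functions produces an essentially unique canonical system whose principal Weyl--Titchmarsh function coincides with it. Unfolding the canonical-system data back into the form~\eqref{eq:ode} supplies the coefficients $a(dy)$ and $b(y)$, with the uniqueness assertion falling out of the uniqueness part of the de Branges correspondence. The main obstacle is precisely this inverse step: both the construction of the de Branges chain from $\psi$ and the matching of normalisations between canonical systems and the measure-ODE formalism~\eqref{eq:operator} are delicate, which is why one has to be careful about the three notational conventions spelled out in the comment preceding the theorem.
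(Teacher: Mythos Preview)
The paper does not prove Theorem~\ref{thm:ode}: it is quoted as a known result from the author's earlier papers~\cite{k22,k23}, which in turn rely on Eckhardt and Kostenko~\cite{ek16}. The only material following the statement consists of a few consequences of those proofs (the fundamental solutions $\ph_\xi^\dir$, $\ph_\xi^\neu$, identities~\eqref{eq:ph:dir:neu} and~\eqref{eq:psi:dir:neu}, and the holomorphy in $\xi$), all referenced back to the appendix of~\cite{k22}. So there is no ``paper's own proof'' to compare against.

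That said, your outline is consistent with what those references actually do. The first-order reformulation for part~\ref{thm:ode:a} is exactly how Lemma~A.1 in~\cite{k22} is set up; your convexity identity for $\lvert \ph_\xi \rvert^2$ is the content of Lemma~A.3 there; the Weyl-disc construction is Lemma~A.6. For part~\ref{thm:ode:b} the Green-type identity you describe is essentially Lemma~A.2 in~\cite{k22} (the monotonicity of $e^{-2B(y)\im\xi}\re(\overline{\xi\ph_\xi}\,\ph_\xi')$ quoted in the proof of Lemma~\ref{lem:ph:dir:nonzero} above), and the canonical-system reformulation is precisely how~\cite{ek16} enters. Your treatment of part~\ref{thm:ode:c} as a direct appeal to the Eckhardt--Kostenko inverse theorem is accurate; this is indeed the deep step and is not reproved in~\cite{k22,k23}. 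One small caution: in your convexity computation the momentum variable should be $\ph_\xi' + i\xi b\,\ph_\xi$ throughout (you switch sign between the two occurrences), and the integrability of $\lvert \ph_\xi b\rvert^2 + \lvert \ph_\xi'\rvert^2$ needs the decreasing behaviour of $\lvert\ph_\xi\rvert^2$ and $\lvert\ph_\xi'\rvert$ established first, not just the integrated identity, since the boundary term at $R$ has to be controlled.
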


The proof of Theorem~\ref{thm:ode} involves the following intermediate results that we will need below. For every $\xi \in \C$, the two-dimensional space of solutions of~\eqref{eq:ode} is spanned by two functions $\ph_\xi^\dir$ and $\ph_\xi^\neu$, which satisfy the Dirichlet and Neumann initial conditions, respectively:
\formula{
 \ph_\xi^\dir(0) & = 0 , & \ph_\xi^\neu(0) & = 1 , \\
 (\ph_\xi^\dir)'(0^+) & = 1 , & (\ph_\xi^\neu)'(0^+) & = \frac{1}{2} \, a(\{0\}) \xi^2 ;
}
see Lemma~A.1 in~\cite{k22}, and note the changes in the notation mentioned before the statement of Theorem~\ref{thm:ode}. We call $\ph_\xi^\dir$ and $\ph_\xi^\neu$ fundamental solutions of the ODE~\eqref{eq:ode}. Additionally, for $\xi > 0$ we have
\formula[eq:ph:dir:neu]{
 \ph_\xi(y) & = \ph_\xi^\neu(y) - 2 \psi(\xi) \ph_\xi^\dir(y)
}
and
\formula[eq:psi:dir:neu]{
 \psi(\xi) & = \lim_{y \to R^-} \frac{1}{2} \, \frac{\ph_\xi^\neu(y)}{\ph_\xi^\dir(y)} \, ;
}
see the proof of Lemma~A.6 in~\cite{k22}, and keep in mind that the function $k(\xi)$ from~\cite{k22} is denoted here by $2 \psi(\xi)$. Finally, for every fixed $y \in [0, R)$, the mappings $\xi \mapsto \ph_\xi^\dir(y)$ and $\xi \mapsto \ph_\xi^\neu(y)$ are entire functions; see Lemma~A.1 in~\cite{k22}.

\subsection{The fundamental solution}

We need more information about the fundamental solution $\ph_\xi^\dir(y)$. Virtually all results in this section remain true for the other fundamental solution $\ph_\xi^\neu$, but we will not need them.

\begin{lemma}
\label{lem:ph:dir:nonzero}
We have
\formula{
 \ph_\xi^\dir(y) & \ne 0
}
when $\re \xi > 0$ and $y \in (0, R)$.
\end{lemma}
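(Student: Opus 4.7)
The plan is to argue by contradiction: suppose $\ph_{\xi_0}^\dir(y_0) = 0$ for some $y_0 \in (0, R)$ and some $\xi_0$ with $\re \xi_0 > 0$. I will consider the \emph{truncated} spectral problem on $[0, y_0]$, with the same coefficients $a(dy)$ and $b(y)$ restricted to $[0, y_0)$ but with the right endpoint shifted from $R$ to $y_0$. This new problem satisfies the hypotheses of Theorem~\ref{thm:ode}, so part~\ref{thm:ode:b} applied to it gives a Rogers function $\tilde\psi_{y_0}$, which is holomorphic throughout $\{\re \xi > 0\}$.

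The fundamental solutions $\ph_\xi^\dir$ and $\ph_\xi^\neu$ of the truncated problem coincide on $[0, y_0]$ with those of the original problem, since the coefficients agree there. Applying the analogue of~\eqref{eq:ph:dir:neu} to the truncated problem and imposing the boundary condition $\tilde\ph_\xi(y_0^-) = 0$ yields, for real $\xi > 0$,
\formula{
 \tilde\psi_{y_0}(\xi) & = \frac{1}{2} \, \frac{\ph_\xi^\neu(y_0)}{\ph_\xi^\dir(y_0)} \, .
}
Both $\xi \mapsto \ph_\xi^\dir(y_0)$ and $\xi \mapsto \ph_\xi^\neu(y_0)$ are entire functions (as recalled just after~\eqref{eq:psi:dir:neu}), so the right-hand side is meromorphic in $\C$. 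By analytic continuation on the connected open set $\{\re \xi > 0\}$, the identity persists wherever the right-hand side is defined there. Since the left-hand side is holomorphic on the whole half-plane, every zero of $\ph_\xi^\dir(y_0)$ there must be cancelled by a zero of $\ph_\xi^\neu(y_0)$; in particular $\ph_{\xi_0}^\neu(y_0) = 0$.

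To derive the contradiction I examine the Wronskian
\formula{
 W(y) & = \ph_{\xi_0}^\dir(y) \bigl(\ph_{\xi_0}^\neu\bigr)'(y) - \bigl(\ph_{\xi_0}^\dir\bigr)'(y) \ph_{\xi_0}^\neu(y) .
}
The initial data give $W(0) = 0 \cdot \tfrac{1}{2} a(\{0\}) \xi_0^2 - 1 \cdot 1 = -1$. A direct (distributional) computation from~\eqref{eq:ode} shows that the $a(dy)$-contributions to $\ph_{\xi_0}^\dir (\ph_{\xi_0}^\neu)'' - (\ph_{\xi_0}^\dir)'' \ph_{\xi_0}^\neu$ cancel, leaving $W'(y) = -2 i \xi_0 b(y) W(y)$ and hence $W(y) = -\exp(-2 i \xi_0 \int_0^y b(s) \, ds)$, which is nowhere zero. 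This contradicts the identity $W(y_0) = 0$ that is forced by the simultaneous vanishing $\ph_{\xi_0}^\dir(y_0) = \ph_{\xi_0}^\neu(y_0) = 0$.

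The main technical obstacle is making the Wronskian computation fully rigorous in the distributional setting, where $a$ is a measure, $b$ is merely locally square-integrable, and $\ph_{\xi_0}''$ is a measure; however this is a routine exercise given the framework of fundamental solutions already developed in~\cite{k22}.
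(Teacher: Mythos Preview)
Your argument is correct and takes a genuinely different route from the paper's. The paper applies the energy-type monotonicity inequality of Lemma~A.2 in~\cite{k22}, which states that $y \mapsto e^{-2B(y)\im\xi}\re\bigl(\overline{\xi\ph(y)}\,\ph'(y^+)\bigr)$ is nondecreasing for any solution of~\eqref{eq:ode}; for $\ph=\ph_\xi^\dir$ the initial data make this quantity strictly positive for small $y$, hence for all $y$, and $\ph_\xi^\dir(y)\ne0$ follows immediately. Your approach instead bootstraps from the structural theory: you truncate at $y_0$, invoke Theorem~\ref{thm:ode}\ref{thm:ode:b} for the truncated problem to get a Rogers function $\tilde\psi_{y_0}$ holomorphic on $\{\re\xi>0\}$, then use~\eqref{eq:ph:dir:neu} (cleaner in the multiplied form $\ph_\xi^\neu(y_0)=2\tilde\psi_{y_0}(\xi)\ph_\xi^\dir(y_0)$, which continues analytically without worrying about zeros in the denominator) to force $\ph_{\xi_0}^\neu(y_0)=0$, and finish with a Wronskian contradiction. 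The paper's method is shorter and uses only one lemma from~\cite{k22}, while yours nicely illustrates that holomorphy of the Rogers function already encodes the non-vanishing of $\ph_\xi^\dir$. Your ``technical obstacle'' is indeed routine: since $\ph^\dir,\ph^\neu$ are continuous and their derivatives are BV, the product rule gives $dW=\ph^\dir(\ph^\neu)''-\ph^\neu(\ph^\dir)''$ as measures, the $a(dy)$ contributions cancel pointwise, and one is left with the absolutely continuous equation $W'=-2i\xi_0 b\,W$.
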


\begin{proof}
By Lemma~A.2 in~\cite{k22}, if $B(y) = \int_0^y b(s) ds$, then
\formula[eq:ph:dir:monotone]{
 e^{-2 B(y_1) \im \xi} \re(\overline{\xi \ph_\xi^\dir(y_1)} (\ph_\xi^\dir)'(y_1^+)) & \le e^{-2 B(y_2) \im \xi} \re(\overline{\xi \ph_\xi^\dir(y_2)} (\ph_\xi^\dir)'(y_2^+))
}
for every $\xi \in \C$ with $\re \xi > 0$ and every $y_1, y_2 \in [0, R)$ with $y_1 < y_2$ (and in fact the same is true for every solution of~\eqref{eq:ode}, not just $\ph_\xi^\dir(y)$). Observe that since $\ph_\xi^\dir(0) = 0$ and $(\ph_\xi^\dir)'(0^+) = 1$, we have
\formula{
 \ph_\xi^\dir(y) & = y + o(y) , \\
 (\ph_\xi^\dir)'(y^+) & = 1 + o(1)
}
as $y \to 0^+$, and hence
\formula{
 \overline{\xi \ph_\xi^\dir(y)} (\ph_\xi^\dir)'(y^+) = \overline{\xi} y + o(y) .
}
In particular, in some right neighbourhood of $0$ we have
\formula{
 \re(\overline{\xi \ph_\xi^\dir(y)} (\ph_\xi^\dir)'(y^+)) & > 0 ,
}
and therefore, by~\eqref{eq:ph:dir:monotone}, the above inequality holds for every $y \in (0, R)$. In particular, the desired result $\ph_\xi^\dir(y) \ne 0$ follows.
\end{proof}

\begin{lemma}
\label{lem:ph:dir:order}
For every $y \in (0, R)$ there is a constant $C > 0$ such that
\formula{
 \lvert \ph_\xi^\dir(y) \rvert & \le \exp(C (1 + \lvert \xi \rvert^2))
}
for all $\xi \in \C$.
\end{lemma}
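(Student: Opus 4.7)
The plan is to recast the ODE \eqref{eq:ode} as a vector-valued Volterra--Stieltjes integral equation and apply a Gronwall-type bound. Integrating \eqref{eq:ode} once over $[0, y]$ and using $\ph_\xi^\dir(0) = 0$ and $(\ph_\xi^\dir)'(0^+) = 1$, I would write
\formula{
 (\ph_\xi^\dir)'(y^+) & = 1 + \xi^2 \int_{[0, y]} \ph_\xi^\dir(s) a(ds) + \xi^2 \int_0^y (b(s))^2 \ph_\xi^\dir(s) ds - 2 i \xi \int_0^y b(s) (\ph_\xi^\dir)'(s^+) ds ,
}
together with $\ph_\xi^\dir(y) = \int_0^y (\ph_\xi^\dir)'(s^+) ds$. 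Setting $Y(y) = (\ph_\xi^\dir(y), (\ph_\xi^\dir)'(y^+))$ and combining the two relations yields a single vector equation of the form $Y(y) = Y(0) + \int_{[0, y]} M(ds) Y(s^+)$, where $Y(0) = (0, 1)$ and $M$ is a $2 \times 2$ matrix-valued measure on $[0, y]$ whose total variation is bounded by $y + \lvert \xi \rvert^2 (a([0, y]) + \int_0^y (b(s))^2 ds) + 2 \lvert \xi \rvert \int_0^y \lvert b(s) \rvert ds$.

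Next, the standard Picard iteration for linear Stieltjes--Volterra equations (equivalently, an integral form of Gronwall's inequality) gives
\formula{
 \lvert Y(y) \rvert & \le \lvert Y(0) \rvert \exp(V(y)) ,
}
where $V(y)$ denotes the total variation of $M$ on $[0, y]$. Applying the Cauchy--Schwarz bound $\int_0^y \lvert b(s) \rvert ds \le (y \int_0^y (b(s))^2 ds)^{1/2}$ and the elementary inequality $2 \lvert \xi \rvert \le 1 + \lvert \xi \rvert^2$, one checks that $V(y) \le C (1 + \lvert \xi \rvert^2)$ for a constant $C$ depending only on $y$, $a([0, y])$ and $\int_0^y (b(s))^2 ds$. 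This immediately yields the desired estimate $\lvert \ph_\xi^\dir(y) \rvert \le \exp(C (1 + \lvert \xi \rvert^2))$.

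The main technical subtlety I anticipate is the bookkeeping with the one-sided limit $(\ph_\xi^\dir)'(s^+)$: because $a$ may have atoms, $(\ph_\xi^\dir)'$ is only right-continuous of locally bounded variation, so the integral equation and its Picard iterates must be set up within Stieltjes calculus rather than classical Riemann integration. This is routine but requires care; if one wishes to avoid it, the estimate can instead be proved first for absolutely continuous $a$ (where the iteration is standard), and then transferred to the general case by an approximation argument using the uniqueness statement in Theorem~\ref{thm:ode}\ref{thm:ode:a}.
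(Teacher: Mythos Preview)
Your proposal is correct and follows essentially the same approach as the paper. The paper's proof simply quotes the relevant intermediate estimates from the proof of Lemma~A.1 in~\cite{k22}, where existence of $\ph_\xi^\dir$ is established precisely via the Volterra--Stieltjes integral equation and Picard iteration you describe; the resulting bound there reads $\lvert \ph_\xi^\dir(y) \rvert \le 2 \exp(2y + 4\lvert\xi\rvert^2 a([0,y)) + 4\lvert\xi\rvert^2 \int_0^y b^2 + 8\lvert\xi\rvert \int_0^y \lvert b\rvert)$, which is exactly your $\exp(V(y))$ estimate with specific constants. Your remark about the Stieltjes bookkeeping is apt but slightly overstated: the only singular part of $M$ sits in the $(2,1)$ entry and is integrated against the continuous component $\ph_\xi^\dir(s)$, so no left/right limit ambiguity actually arises.
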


\begin{proof}
This estimate follows from the proof of Lemma~A.1 in~\cite{k22}. Indeed: $\ph_\xi^\dir$ corresponds to $\alpha = 0$, $\beta = 1$ in that proof, and we choose $C = \lvert \xi \rvert$ (line~10 on page~33 in~\cite{k22}), so that
\formula{
 M(y) & = \exp \biggl( 2 y + 4 \lvert \xi \rvert^2 a([0, y)) + 4 \lvert \xi \rvert^2 \int_0^y (b(s))^2 ds + 8 \lvert \xi \rvert \int_0^y \lvert b(s) \rvert ds \biggr)
}
(line~12 on page~33 in~\cite{k22}). Since
\formula{
 \lvert \ph_\xi^\dir(y) \rvert & \le \lVert \ph_\xi^\dir \rVert_X M(y)
}
(lines~21--22 on page~33 in~\cite{k22}) and
\formula{
 \lVert \ph_\xi^\dir \rVert_X & \le 2 \lvert \alpha \rvert + 2 \lvert \beta \rvert = 2
}
(line~2 on page~34 in~\cite{k22}), we conclude that $\lvert \ph_\xi^\dir(y) \rvert \le 2 M(y)$. The desired estimate follows.
\end{proof}

\begin{proposition}
\label{prop:ph:dir:pf}
For a fixed $y \in [0, R)$, the function $\xi \mapsto 1 / \ph_\xi^\dir(y)$ is the Fourier transform of an integrable Pólya frequency function.
\end{proposition}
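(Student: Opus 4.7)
My strategy is to apply Hadamard factorisation to the entire function $\xi \mapsto \ph_\xi^\dir(y)$ and then match the resulting product against the characterisation of Pólya frequency function Fourier transforms given by Theorem~\ref{thm:pff}.

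\emph{Location of zeros.} By Lemma~\ref{lem:ph:dir:order}, $\ph_\xi^\dir(y)$ is entire of order at most~$2$; by Lemma~\ref{lem:ph:dir:nonzero}, it has no zeros with $\re\xi > 0$. Conjugating the ODE~\eqref{eq:ode} and using that $a(dy)$ and $b(y)$ are real yields the symmetry $\overline{\ph_\xi^\dir(y)} = \ph_{-\overline{\xi}}^\dir(y)$, so there are no zeros with $\re\xi < 0$ either. At $\xi = 0$, the ODE becomes $\ph''(dy) = 0$ with Dirichlet initial data, whence $\ph_0^\dir(y) = y > 0$, ruling out a zero at the origin. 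Every zero of $\ph_\xi^\dir(y)$ therefore has the form $i\mu_n$ with $\mu_n \in \R \setminus \{0\}$.

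\emph{Factorisation and matching.} Setting $\lambda_n = -1/\mu_n$, Hadamard's theorem gives
\[
 \ph_\xi^\dir(y) = e^{P(\xi)}\,\prod_n (1 - i\lambda_n\xi)\,\exp\bigl(i\lambda_n\xi - \tfrac{1}{2}\lambda_n^2\xi^2\bigr)
\]
for some polynomial $P$ of degree at most~$2$. To match Theorem~\ref{thm:pff} I need: (a)~the summability $\sum \lambda_n^2 < \infty$, so that the quadratic convergence factors can be absorbed into the exponential prefactor; and (b)~the resulting exponent has the form $a\xi^2 - ib\xi - c$ with $a \ge 0$ and $b, c \in \R$. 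Granted~(a), the conjugate symmetry above forces the linear coefficient of $P$ to be purely imaginary and the constant and quadratic coefficients to be real. The sign $a \ge 0$ should follow from controlling the growth of $\lvert\ph_\xi^\dir(y)\rvert$ along the real $\xi$-axis, using the boundedness of $\lvert\ph_\xi(y)\rvert^2$ in Theorem~\ref{thm:ode}(a) together with the decomposition~\eqref{eq:ph:dir:neu}.

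\emph{Main obstacle.} The hard point is~(a), since Hadamard factorisation for order $\le 2$ only guarantees $\sum \lvert\mu_n\rvert^{-(2+\eps)} < \infty$ for every $\eps > 0$, not the borderline $\sum \mu_n^{-2} < \infty$ required here. I would circumvent this by approximation: replace $a(dy)$ by a finitely supported measure and $b(y)$ by a piecewise constant function. For such data, the ODE~\eqref{eq:ode} reduces to piecing together linear solutions, so $\ph_\xi^\dir(y)$ becomes a polynomial in $\xi$ (hence an entire function of order $0$) with finitely many, explicitly imaginary, zeros, and Proposition~\ref{prop:ph:dir:pf} can be verified by direct calculation. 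Passing to the limit, using joint continuity of $\ph_\xi^\dir(y)$ in $\xi$ and in the data together with the closedness of the class of Pólya frequency Fourier transforms under pointwise convergence (via Lévy's continuity theorem), yields the general statement. Setting up this approximation so that the requisite uniform tail estimates hold is the principal technical step.
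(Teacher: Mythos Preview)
Your overall strategy matches the paper's proof exactly: locate the zeros on the imaginary axis via Lemma~\ref{lem:ph:dir:nonzero} and the conjugation symmetry, apply Hadamard factorisation, and then read off the Pólya frequency form from Theorem~\ref{thm:pff}. The gap is your ``main obstacle''~(a): you treat the summability $\sum_n \lambda_n^2 < \infty$ as genuinely hard and propose to bypass it by an approximation argument that you do not carry out. The paper settles this point directly, in two lines, and the trick is worth knowing.

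The key is that the zeros lie not merely in some half-plane but \emph{exactly} on the imaginary axis. Suppose the genus were~$2$; then the order is~$2$, and by Lemma~\ref{lem:ph:dir:order} the type is finite. A standard refinement of Hadamard's theory (Lindel\"of's theorem; Theorem~4 in Lecture~5 of~\cite{l97}) then says that the partial sums $\sum_{|\zeta_n| < r} (i\zeta_n)^{-2}$ stay bounded as $r \to \infty$. But here $(i\zeta_n)^{-2} = -\zeta_n^{-2} < 0$ for every~$n$, so these partial sums are monotone and bounded, hence convergent. That gives $\sum_n \zeta_n^{-2} < \infty$, contradicting genus~$2$. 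So the genus is at most~$1$, the quadratic convergence factors in your Hadamard product are unnecessary, and~(a) follows immediately. Your approximation scheme, while plausible in outline, is not needed and would be considerably more work to make rigorous.

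For~(b), the paper does not go through the decomposition~\eqref{eq:ph:dir:neu} as you suggest. Instead it uses Lemma~A.3 in~\cite{k22}: for real $\xi > 0$ the function $y \mapsto |\ph_\xi^\dir(y)|^2$ is convex on $[0,R)$, and together with the Dirichlet initial data this yields a positive lower bound on $|\ph_\xi^\dir(y)|$ that is uniform in~$\xi$. In particular $\ph_\xi^\dir(y)$ does not tend to~$0$ along the positive real axis, which rules out $a < 0$ by Borel's theorem on the minimum modulus of canonical products.
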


\begin{proof}
Denote $\eta(\xi) = \ph_\xi^\dir(y)$. We already know that $\eta(\xi)$ is an entire function, and since the complex conjugate of $\ph_\xi^\dir(y)$ is the solution of the ODE~\eqref{eq:ode} with $\xi$ replaced by $-\bar \xi$, we have $\eta(-\bar \xi) = \overline{\eta(\xi)}$. Additionally, $\eta(0) = \ph_0^\dir(y) = y$. By Lemma~\ref{lem:ph:dir:nonzero}, $\eta(\xi)$ has no zeroes in the right complex half-plane $\re \xi > 0$, and due to $\eta(-\bar \xi) = \overline{\eta(\xi)}$, $\eta(\xi)$ has no zeroes in the left complex half-plane $\re \xi < 0$. Thus, all zeroes of $\eta(\xi)$ lie on the imaginary axis $i \R$. Let $i \zeta_n$ denote the (finite or infinite) sequence of all zeros of $\eta(\xi)$, repeated according to their multiplicity.

By Lemma~\ref{lem:ph:dir:order}, the entire function $\eta$ is of order at most $2$, and if it is of order $2$, then it is of finite type. Let $p$ be the genus of $\eta(\xi)$, that is, the least nonnegative integer such that $\sum_n \lvert \zeta_n \rvert^{-p - 1}$ is finite. By Hadamard's factorisation theorem (Theorem~1 in Lecture~4 in~\cite{l97}), we have $p \le 2$. We claim that in fact $p \le 1$.

Suppose, contrary to our claim, that $p = 2$. This is only possible when $\eta(\xi)$ is of order $2$, and in this case, since $\eta(\xi)$ is of finite type, partial sums
\formula{
 \sum_n \frac{1}{(i \zeta_n)^2} \, \ind_{[0, r)}(\lvert \zeta_n \rvert) = -\sum_n \frac{1}{\lvert \zeta_n \rvert^2} \, \ind_{[0, r)}(\lvert \zeta_n \rvert)
}
are bounded as $r \to \infty$ (see Theorem~4 in Lecture~5 in~\cite{l97}). By the monotone convergence theorem, the series $\sum_n \lvert \zeta_n \rvert^{-2}$ necessarily converges (it is here that we use the fact that all zeroes of $\eta(\xi)$ lie on the imaginary axis), and so $p \le 1$, contrary to our assumption $p = 2$. This completes the proof of our claim: we have $p \le 1$.

By Hadamard's factorisation theorem (Theorem~1 in Lecture~4 in~\cite{l97}), there are constants $a, b \in \C$ such that
\formula{
 \eta(\xi) & = y \, e^{a \xi^2 + i b \xi} \prod_n e^{\xi / (i \zeta_n)} \biggl( 1 - \frac{\xi}{i \zeta_n} \biggr) .
}
By Theorem~\ref{thm:pff}, in order to prove that $1 / \eta(\xi)$ is the Fourier transform of an integrable Pólya frequency function, it remains to show that $a \ge 0$ and $b \in \R$.

Since $\eta(-\bar \xi) = \overline{\eta(\xi)}$, we have $a \bar \xi^2 - i b \bar \xi = \overline{a \xi^2 + i b \xi}$, at least for $\xi$ sufficiently close to $0$. Therefore, $a$ and $b$ are necessarily real.

Suppose that $\xi > 0$. By Lemma~A.3 in~\cite{k22}, $\lvert \ph_\xi^\dir(y) \rvert^2$ is a convex function of $y \in [0, R)$, and since $\ph_\xi^\dir(0) = 0$ and $(\ph_\xi^\dir)'(0^+) \ge 1$, we have $\lvert \eta(\xi) \rvert^2 = \lvert \ph_\xi^\dir(y) \rvert^2 \ge 2 y$. In particular, $\eta(\xi)$ does not converge to $0$ as $\xi \to \infty$. This proves that $a \ge 0$: otherwise, by Borel's theorem (see Theorem~3 in Lecture~4 in~\cite{l97}), the term $e^{a \xi^2 + i b \xi}$ would converge to zero fast enough to suppress the growth of the Weierstrass product $\prod_n e^{\xi / (i \zeta_n)} (1 - \xi / (i \zeta_n))$.
\end{proof}

\subsection{Two subproblems}

Fix $\check R \in (0, R)$. We split the ODE~\eqref{eq:ode} into two problems. The equation on $(\check R, R)$, translated to $(0, R - \check R)$, will correspond to symbols with a hat. Parameters and solutions of the equation on $[0, \check R]$, reflected about $\tfrac{1}{2} \check R$, will be denoted with a check.

We define
\formula{
 \hat R & = R - \check R , \\
 \hat a(dy) & = \ind_{(0, \hat R)}(y) a(\check R + dy) , \\
 \hat b(y) & = b(\check R + y) ;
}
note that if $R = \infty$, then also $\hat R = \infty$. Clearly,
\formula{
 \hat \ph_\xi(y) & = \frac{\ph_\xi(\check R + y)}{\ph_\xi(\check R)}
}
is the (unique) solution of the ODE~\eqref{eq:ode} described in Theorem~\ref{thm:ode}, with $R, a, b$ replaced by $\hat R, \hat a, \hat b$. By that theorem,
\formula{
 \hat \psi(\xi) & = -\frac{1}{2} \, \hat \ph_\xi'(0^+) = -\frac{1}{2} \, \frac{\ph_\xi'(\check R^+)}{\ph_\xi(\check R)}
}
is a Rogers function.

Similarly, we define
\formula{
 \check a(dy) & = \ind_{[0, \check R)}(y) a(\check R - dy) , \\
 \check b(y) & = -b(\check R - y) ,
}
and we denote by $\check \ph_\xi^\dir$ and $\check \ph_\xi^\neu$ the fundamental solutions of the ODE~\eqref{eq:ode} with $R, a, b$ replaced by $\check R, \check a, \check b$. It is straightforward to verify that $\check \ph_\xi^\dir(\check R - y)$ and $\check \ph_\xi^\neu(\check R - y)$ solve~\eqref{eq:ode} on $(0, \check R)$. Furthermore,
\formula[eq:fact:0]{
 \ph_\xi(y) & = \ph_\xi(\check R) \check \ph_\xi^\neu(\check R - y) - \ph_\xi'(\check R^+) \check \ph_\xi^\dir(\check R - y) ,
}
and a similar equation holds true with $\ph_\xi$ replaced by $\ph_\xi^\dir$ or by $\ph_\xi^\neu$. In particular, $\check \ph_\xi^\dir$ and $\check \ph_\xi^\neu$ are continuous functions on $[0, \check R]$, and by~\eqref{eq:psi:dir:neu}, the corresponding Rogers function $\check \psi(\xi)$ is given by
\formula{
 \check \psi(\xi) & = \frac{1}{2} \, \frac{\check \ph_\xi^\neu(R)}{\check \ph_\xi^\dir(\check R)} \, .
}
In particular, $\check \psi(0) = 1 / (2 \check R) > 0$.

\subsection{Factorisation}

Identity~\eqref{eq:fact:0} with $y = 0$ reads
\formula{
 1 & = \ph_\xi(\check R) \check \ph_\xi^\neu(\check R) - \ph_\xi'(\check R^+) \check \ph_\xi^\dir(\check R) .
}
Since $\ph_\xi'(\check R^+) = -2 \hat \psi(\xi) \ph_\xi(\check R)$, we find that
\formula{
 1 & = \ph_\xi(\check R) (\check \ph_\xi^\neu(\check R) + 2 \hat \psi(\xi) \check \ph_\xi^\dir(\check R)) .
}
Furthermore, $\check \ph_\xi^\neu(\check R) = 2 \check \psi(\xi) \check \ph_\xi^\dir(\check R)$. Hence,
\formula{
 1 & = \ph_\xi(\check R) \check \ph_\xi^\dir(\check R) (2 \check \psi(\xi) + 2 \hat \psi(\xi)) .
}
This leads us to the factorisation
\formula[eq:fact]{
 \ph_\xi(\check R) & = \frac{1}{\check \ph_\xi^\dir(\check R)} \, \frac{1}{2 \check \psi(\xi) + 2 \hat \psi(\xi)} \, .
}
By Proposition~\ref{prop:ph:dir:pf}, the factor $1 / \check \ph_\xi^\dir(\check R)$ (as a function of $\xi$) is the Fourier transform of an integrable Pólya frequency function. On the other hand, $\check \psi(0) > 0$, and so, by Proposition~\ref{prop:rogers:potential}, the factor $1 / (2 \check \psi(\xi) + 2 \hat \psi(\xi))$ is the Fourier transform of an integrable $\amcm$ function. Hence, by~\eqref{eq:fact} and Theorem~\ref{thm:bell}, $\ph_\xi(\check R)$ (as a function of $\xi$) is the Fourier transform of an (integrable) bell-shaped function. We state this result below.

\begin{proposition}
\label{prop:ph:dir:bs}
For every $y \in (0, R)$, the function $\ph_\xi(y)$ is the Fourier transform of an integrable bell-shaped function.
\end{proposition}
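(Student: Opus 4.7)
The plan is simple: the proposition is essentially already proved by the discussion preceding its statement, and what remains is to package it by choosing $\check R = y$ in the factorisation~\eqref{eq:fact}. Given $y \in (0, R)$, I would carry out the construction of the preceding subsection with this choice, yielding
\[
 \ph_\xi(y) = \frac{1}{\check\ph_\xi^\dir(y)} \cdot \frac{1}{2 \check\psi(\xi) + 2 \hat\psi(\xi)}.
\]

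I would then identify each factor in turn. The first factor $1/\check\ph_\xi^\dir(y)$ is the Fourier transform of an integrable Pólya frequency function by Proposition~\ref{prop:ph:dir:pf} applied to the reflected subproblem with data $(\check R, \check a, \check b)$. For the second factor, Theorem~\ref{thm:ode}\ref{thm:ode:b} applied to each of the two subproblems shows that $\check\psi$ and $\hat\psi$ are Rogers functions. Since the class of Rogers functions is closed under addition (the defining condition $\re(\psi(\xi)/\xi) \ge 0$ is stable under sums), $2\check\psi + 2\hat\psi$ is again a Rogers function, and its limit at $0^+$ satisfies
\[
 2\check\psi(0^+) + 2\hat\psi(0^+) \ge 2\check\psi(0) = \frac{1}{\check R} > 0,
\]
using $\hat\psi(0^+) \ge 0$, which is immediate from the Lévy--Khintchine representation in Proposition~\ref{prop:rogers:levy}. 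Proposition~\ref{prop:rogers:potential} therefore identifies $1/(2\check\psi + 2\hat\psi)$ as the Fourier transform of an integrable $\amcm$ function.

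Finally, since the Fourier transform converts pointwise products into convolutions, $\ph_\xi(y)$ is the Fourier transform of the convolution of an integrable Pólya frequency function with an integrable $\amcm$ measure, and Theorem~\ref{thm:bell} immediately yields that this convolution is bell-shaped. The plan poses no real obstacle: all the technical work has been absorbed into Propositions~\ref{prop:ph:dir:pf} and~\ref{prop:rogers:potential}, and the only minor verifications are the closure of the Rogers class under addition and the strict positivity $\check\psi(0) = 1/(2\check R) > 0$, both of which are immediate from the definitions.
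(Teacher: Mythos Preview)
Your proposal is correct and matches the paper's own argument essentially line for line: the paper derives the factorisation~\eqref{eq:fact}, applies Proposition~\ref{prop:ph:dir:pf} to the first factor and Proposition~\ref{prop:rogers:potential} (using $\check\psi(0)=1/(2\check R)>0$) to the second, and then invokes Theorem~\ref{thm:bell}. The only differences are cosmetic---you spell out the closure of Rogers functions under addition and the nonnegativity of $\hat\psi(0^+)$, both of which the paper leaves implicit.
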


%
%

\section{PDE interpretation}
\label{sec:pde}

Recall that we consider elliptic operators $L$ given by~\eqref{eq:operator:general}. With Proposition~\ref{prop:ph:dir:bs} at hand, the proof of Theorem~\ref{thm:main} reduces to an application of Lemma~4.1 in~\cite{k22}. This result asserts that an $L$-harmonic function $u(x, y)$ with boundary values $f(x)$ (see Definition~2.2 in~\cite{k22} for a precise meaning) satisfies
\formula{
 \fourier u(\xi, y) & = \ph_\xi(y) \fourier f(\xi)
}
for almost every $\xi \in \R$, with any fixed $y \in [0, R)$. By Proposition~\ref{prop:ph:dir:bs}, for a given $y \in [0, R)$ the function $\ph_\xi(y)$ is the Fourier transform of an integrable weakly bell-shaped function (or, more generally, a measure) $P_y(x)$. By the exchange formula,
\formula{
 u(x, y) & = \int_{\R} f(s) P_y(x - s) ds
}
for almost every $x \in \R$. Therefore, $P_{(x, y)}(s) = P_y(x - s)$ is indeed the Poisson kernel for the operator $L$, and it is weakly bell-shaped.

%
%

\section{Probabilistic interpretation}
\label{sec:prob}

The proof of Corollary~\ref{cor:main} is slightly more involved, and we adopt the notation used in~\cite{k23}. By the results of Section~4.1 in~\cite{k23}, with no loss of generality we restrict our attention to regular shift-invariant diffusions $(X(t), Y(t))$ in $\R \times [0, R)$, where $R \in (0, \infty]$. That is to say, we assume that $X(t)$ is a local martingale, and $Y(t)$ is the Brownian motion in $[0, R)$, reflected at $0$ and killed at $R$ (if $R$ is finite). We denote by $T_R$ the lifetime of $Y(t)$, and we let $T_0$ be the hitting time of $\{0\}$ for $Y(t)$.

The horizontal coordinate, $X(t)$, is driven by some Brownian motion $W(t)$ in $\R$, independent of $Y(t)$. More precisely, we have
\formula[eq:xt]{
 X(t) & = W(A(t)) + B(t) ,
}
where $A(t)$ and $B(t)$ are appropriate additive functionals of $Y(t)$:
\formula{
 A(t) & = \int_{[0, R)} L_y(t) a(dy) , \\
 B(t) & = \int_0^t b(Y(s)) d\dot Y(s) .
}
Here $\dot Y(t) = Y(t) - \tfrac{1}{2} L_0(t)$ denotes the martingale part in the canonical decomposition of the semimartingale $Y(t)$, $a(dy)$ is a nonnegative Radon measure on $[0, R)$, and $b(y)$ is a locally square-integrable real-valued function on $[0, R)$. We refer to~\cite{k23} for the details.

For $y \ge 0$ we write $\pr^y$ and $\ex^y$ for the probability and expectation corresponding to the starting points $W(0) = 0$ and $Y(0) = y$. We also write $\ex_W$ for the integration with respect to the law of $W(t)$ alone; equivalently, $\ex_W$ is the conditional expectation with respect to the $\sigma$-algebra generated by the process $Y(t)$.

\begin{proposition}
\label{prop:fourier}
The characteristic function of $X(T_0)$ is given by
\formula{
 \ex^y \bigl( e^{i \xi X(T_0)} \ind_{\{T_0 < T_R\}} \bigr) & = \ph_\xi(y) ,
}
where $\ph_\xi(y)$ is the solution of the ODE~\eqref{eq:ode} discussed in Theorem~\ref{thm:ode}.
\end{proposition}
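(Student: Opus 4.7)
The plan is two-fold: first, integrate out the horizontal Brownian motion $W$ to reduce the claim to a statement about a functional of $Y$ alone; second, identify that functional with $\ph_\xi(y)$ by a bounded-martingale argument driven by the ODE~\eqref{eq:ode}. For the first step, conditional on the trajectory of $Y$ the random variable $W(A(T_0))$ is centred Gaussian with variance $A(T_0)$, so Gaussian integration gives
\formula{
 \ex^y\bigl(e^{i \xi X(T_0)} \ind_{\{T_0 < T_R\}}\bigr) & = \ex^y\bigl(F(T_0) \ind_{\{T_0 < T_R\}}\bigr) ,
}
where $F(t) := \exp\bigl(-\tfrac{1}{2} \xi^2 A(t) + i \xi B(t)\bigr)$, and it suffices to show that the right-hand side equals $\ph_\xi(y)$.

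The key step is to verify that the complex-valued process $N(t) := F(t) \ph_\xi(Y(t))$, stopped at $T := T_0 \wedge T_R$, is a bounded martingale. The differential $dF$ is computed from the ordinary Itô formula applied to the exponential of the semimartingale $-\tfrac{1}{2}\xi^2 A + i \xi B$, while $d\ph_\xi(Y(t))$ requires the generalised Meyer--Tanaka formula, since $\ph_\xi$ is only $C^1$ with $\ph_\xi'$ of locally bounded variation. Using the decomposition $dY = d\dot Y + \tfrac{1}{2} dL_0$, the identities $d\langle B\rangle_t = (b(Y(t)))^2 dt$ and $\langle A, B\rangle = 0$, and the occupation-measure identity that rewrites $\int L_y(t) \ph_\xi''(dy)$ via the ODE~\eqref{eq:ode}, the finite-variation part of $dN$ comes to consist of terms proportional to $dA$, to $(b(Y))^2 dt$ and to $b(Y) \ph_\xi'(Y) dt$, all of which cancel by virtue of~\eqref{eq:ode}, together with a single boundary term $\tfrac{1}{2} \ph_\xi'(0^+) F(t) dL_0(t)$.

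Stopping at $T$ eliminates the boundary term, since $L_0$ does not accumulate before $Y$ first touches $0$, hence $L_0(t \wedge T) \equiv 0$. Boundedness of the resulting stopped local martingale is immediate from $|F(t)| \le 1$ (its exponent has nonpositive real part) and the boundedness of $\ph_\xi$ supplied by Theorem~\ref{thm:ode}\ref{thm:ode:a}, so $N(\cdot \wedge T)$ is a genuine martingale. Since $T < \infty$ almost surely (the reflected Brownian motion $Y$ either hits $0$ or is killed at $R$), bounded convergence gives $\ph_\xi(y) = \ex^y N(0) = \ex^y N(T)$. On $\{T_0 < T_R\}$ we have $Y(T) = 0$ and $\ph_\xi(0) = 1$, so $N(T) = F(T_0)$; on the complementary event $Y(t) \to R^-$ as $t \to T_R^-$ and the Dirichlet condition $\ph_\xi(R^-) = 0$ forces $N(T) = 0$. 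Combined with the first step, this is the claim.

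The main difficulty lies in the generalised Itô calculation and the bookkeeping leading to the cancellation above. The second derivative $\ph_\xi''$ is merely a complex signed measure, so some care is needed to interpret its pairing with the occupation density $L_y(t)$, to convert the $a(dy)$-part into a time integral against $dA(t)$, and to keep track of real and imaginary parts; it is precisely the cross term $-i \xi b(y) \ph_\xi'(y) dy$ in~\eqref{eq:ode} that matches, with the correct sign, the cross-variation between the complex exponential $F$ and the martingale part of $\ph_\xi(Y)$ — this is what makes everything collapse to the harmless $dL_0$ term.
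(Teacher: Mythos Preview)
Your proof is correct and follows essentially the same approach as the paper: integrate out $W$ to obtain the multiplicative functional $F(t)=\exp(-\tfrac12\xi^2 A(t)+i\xi B(t))$, then show that $F(t)\ph_\xi(Y(t))$ is a (local) martingale up to $T_0\wedge T_R$ by an It\^o--Tanaka computation whose drift vanishes precisely because of the ODE~\eqref{eq:ode}, with the local-time term killed by stopping before $Y$ reaches~$0$. The only cosmetic difference is that the paper imports the It\^o identity from~\cite{k23} and takes expectations directly (invoking an integrability estimate for the stochastic integrals), whereas you argue via boundedness of the stopped process and bounded convergence; your route is slightly more self-contained and sidesteps the need to cite the $L^2$ bound.
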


\begin{proof}
As in Section~4.3 in~\cite{k23}, we fix $\xi > 0$ and we define
\formula{
 \hat X(t) & = \ind_{\{t < T_R\}} \ex_W e^{i \xi X(t)}
}
and
\formula{
 \Phi(t) & = \ind_{\{t < T_R\}} \ph_\xi(Y(t)) .
}
Note that~\eqref{eq:xt} implies that
\formula{
 \hat X(t) & = \ind_{\{t < T_R\}} e^{i \xi B(t)} \ex_W e^{i \xi W(A(t))} = \ind_{\{t < T_R\}} e^{i \xi B(t) - \xi^2 A(t) / 2} .
}
By an application of Itô's lemma and the Itô--Tanaka formula, combined with the properties of $\ph_\xi(y)$ and $\psi(\xi)$, it was proved in~\cite{k23} that with probability one, for every finite $t \in [0, T_R]$ we have
\formula*[eq:ito]{
 \hat X(t) \Phi(t) - \hat X(0) \Phi(0) & = -\psi(\xi) \int_0^t \hat X(s) dL_0(s) \\
 & \qquad + \int_0^t \hat X(s) \ph_\xi'(Y(s)) d\dot Y(s) \\
 & \qquad\qquad + i \xi \int_0^t \hat X(s) \ph_\xi(Y(s)) b(Y(s)) d\dot Y(s) ;
}
for $t \in [0, T_R)$ this is equation~(4.16) in~\cite{k23}, and in the paragraph that follows an extension to $t = T_R$ (when $T_R < \infty$) is proved.

In~\cite{k23}, formula~\eqref{eq:ito} was applied with $t = L_0^{-1}(u) \wedge T_R$ and the process started at $Y(0) = 0$. Here we consider an arbitrary starting point $Y(0) = y \in [0, R)$ and we apply~\eqref{eq:ito} with $t = T_0 \wedge T_R$. In this case $X(0) = 0$, $\hat X(0) = 1$, $\Phi(0) = \ph_\xi(y)$ and $\Phi(T_0 \wedge T_R) = \ph_\xi(0) \ind_{\{T_0 < T_R\}} = \ind_{\{T_0 < T_R\}}$. Furthermore, $L_0(T_0 \wedge T_R) = 0$. Therefore,
\formula{
 \hat X(T_0) - \ph_\xi(y) & = \int_0^{T_0 \wedge T_R} \hat X(s) \ph_\xi'(Y(s)) d\dot Y(s) \\
 & \qquad + i \xi \int_0^{T_0 \wedge T_R} \hat X(s) \ph_\xi(Y(s)) b(Y(s)) d\dot Y(s)
}
with probability $\pr^y$ one (note that $\hat X(T_0) \ind_{\{T_0 < T_R\}} = \hat X(T_0)$). Taking the expectation of both sides eliminates the Itô integrals, and we find that
\formula{
 \ex^y \hat X(T_0) - \ph_\xi(y) & = 0 ,
}
as desired. Note that taking the expectation is allowed because the appropriate integrals are finite by equation~(4.17) in~\cite{k23}.
\end{proof}

Together with Theorem~\ref{thm:main}, Proposition~\ref{prop:fourier} clearly implies Corollary~\ref{cor:main}.

%
%

\section*{}

\subsection*{Acknowledgements}

I thank Jacek Wszoła for inspiring discussions about the subject of the paper. I thank the anonymous referees for their careful review. This research was funded in whole or in part by National Science Centre, Poland, grant number 2023/49/B/ST1/04303. For the purpose of Open Access, the author has applied a CC-BY public copyright licence to any Author Accepted Manuscript (AAM) version arising from this submission.

%
%

%
%

\end{document}